\documentclass[preprint,12pt]{elsarticle}
\usepackage[singlelinecheck=false,labelsep=period,font=footnotesize]{caption}
\usepackage{comment}
\usepackage{mathtools}
\usepackage{empheq}  
\usepackage{float}
\usepackage{multicol}
\usepackage{multirow}
\usepackage{xr-hyper}
\usepackage{hyperref}
\usepackage{graphicx}
\usepackage{amsfonts}
\usepackage{amsmath, amsthm}
\usepackage{color}
\usepackage[normalem]{ulem} 
\usepackage{epstopdf}
\usepackage{subcaption}
\usepackage{eurosym}
\usepackage{enumitem}
\usepackage{bbm}
\usepackage{pgfplots,pgfplotstable,booktabs}
\pgfplotsset{compat=1.8}
\usepgfplotslibrary{statistics}
\usepackage{balance}


\newtheorem{thm}{Theorem}

\newtheorem{prop}[thm]{Proposition}


\begin{document}

\begin{frontmatter}

\title{A bilevel framework for decision-making under uncertainty with contextual information}

\author{M. A. Mu\~noz, S. Pineda,  J. M. Morales}



\address{OASYS Group, University of Malaga, Malaga, Spain}


\begin{abstract}
In this paper, we propose a novel approach for data-driven decision-making under uncertainty in the presence of contextual information. Given a finite collection of observations of the uncertain parameters
and potential explanatory variables (i.e., the contextual information), our approach fits a parametric model to those data that is specifically tailored to maximizing the decision value, while accounting for possible feasibility constraints. From a mathematical point of view, our framework translates into a bilevel program, for which we provide both a fast regularization procedure and a big-M-based reformulation that can be solved using off-the-shelf optimization solvers. We showcase the benefits of moving from the traditional scheme for model estimation (based on statistical quality metrics) to decision-guided prediction using three different practical problems. We also compare our approach with existing ones in a realistic case study that considers a strategic power producer that participates in the Iberian electricity market. Finally, we use these numerical simulations to analyze the conditions (in terms of the firm's cost structure and production capacity) under which our approach proves to be more advantageous to the producer. 
\end{abstract}

\begin{keyword}
Data-driven decision-making under uncertainty \sep Bilevel programming \sep Statistical regression \sep Strategic producer \sep Electricity market

\end{keyword}

\end{frontmatter}


\section{Introduction}
\label{sec:introduction}
In the last couple of decades, the field of decision-making under uncertainty has regained momentum, spurred by the new opportunities that the Digital Age has brought to modern economies. As a result, this field has been prolific in the design and development of new tools capable of exploiting the vast amount of information that human societies currently generate, compile and record, mainly in the form of \emph{data}.

From among all the exciting advances that 
have been achieved in the realm of decision making under uncertainty in recent years, we highlight the so-called \emph{data-driven optimization under uncertainty}, which endows the decision maker with a powerful and versatile mathematical framework to hedge her decisions against both the intrinsic risk of an uncertain world  and the limited and incomplete knowledge of the random phenomena that can be retrieved from a finite set of observations or data.

Data-driven optimization under uncertainty has been applied to a broad range of contexts and problems, for instance, inventory management~\citep{Bertsimas2020, Donti2017}, nurse staffing~\citep{Ban2019}, portfolio optimization~\citep{Bertsimas2019dynamic, Elmachtoub2017, esfahani2018data}, shipment planning~\citep{Bertsimas2019dynamic}, network flow~\citep{Elmachtoub2017}, power dispatch~\citep{li2019ambiguous}, and vehicle routing~\citep{Hao2020}, just to name a few. For a recent survey on the topic and its applications, we refer the reader to \citep{keith2019survey} and \citep{Bakker2019structuring}. 

In this paper, we first compare the proposed methodology with existing ones using two classical conditional stochastic optimization problems, namely, the newsvendor problem~\cite{Ban2019, Mundru2019, bertsimas2018} and the product placement problem~\citep{Bertsimas2020}. Additionally, we consider the problem of a strategic firm that has to decide the generation quantity that maximizes its expected profit while facing the uncertainty related to market conditions. This problem has a long tradition in the Economics and Management Science literature (see, for instance, \citep{Vives1984, wu2008incentives, Bimpikis2019}). In particular, we take \emph{electricity} as the homogeneous good to be produced and thus, we place ourselves in the context of electricity markets, where this problem has received a great deal of attention since the deregularization of the power sector \citep{Allaz1993, Ruiz2008}. Most existing models address this problem by forecasting, as accurately as possible, the electricity market behavior. Then, such forecasts are used to compute the decision that maximizes the producer's profit. Here we present a novel and alternative data-driven procedure that considers the problem structure and leverage available auxiliary data to enhance market participation and increase profits. Our approach is formulated as a bilevel program that, under convexity assumptions, can be efficiently solved using commercially available optimization solvers. We demonstrate the superior performance of the proposed approach on a realistic case study that uses data from the Iberian electricity market.

In short, our contributions are threefold, namely:
\begin{itemize}
    \item[-] From a methodological point of view, we propose a novel data-driven framework for conditional stochastic optimization, whereby the parameters that are input to the decision-making problem are formulated as a function of some \emph{covariates} or \emph{features}. This function is, in turn, estimated factoring in its impact on the decision value. \textcolor{black}{Finally, by way of this function, we construct a deterministic (single-scenario) surrogate optimization model that delivers decisions that are good in terms of the original conditional stochastic program.} In Section~\ref{sec:math_framework}, we introduce and mathematically formalize our proposal along with alternative state-of-the-art approaches available in the technical literature. Our approach is formulated as a bilevel optimization problem that can be reformulated as a single-level optimization problem and solved using off-the-shelf optimization solvers as discussed in Section \ref{subsec:solution_strategy}.
    \item[-] From a theoretical perspective, we compare our approach with existing ones in Section \ref{sec:application} for three different applications, namely, the newsvendor problem, the product placement problem, and the strategic producer problem.
    \item[-] From a more practical point of view, Section~\ref{sec:simulations} provides simulation results for the strategic producer problem using both an illustrative example and a realistic case study based on the Iberian electricity market. The numerical experiments show that our proposal can significantly increase the competitive edge of the strategic producer depending on her cost structure and the market demand elasticity.
\end{itemize}

We conclude the paper with a brief compilation of the most relevant observations in Section~\ref{sec:conclusions}.

\section{Mathematical framework and related work}
\label{sec:math_framework}
In decision making we often model the uncertainty as a random vector of parameters ($y\in\mathcal{Y}\subseteq\mathbb{R}^m$) governed by a real unknown distribution $Y$ and, typically, some relevant contextual information $(x \in \mathcal{X} \subseteq \mathbb{R}^{p}) \sim X$ is available before the decision is to be made. Following this scenario, the decision maker is interested in solving the \emph{conditional} stochastic optimization problem
\begin{align}
    \min_{z \in Z} & \enskip \mathbb{E} [f(z; Y) | X = x] \label{eq:ms0_obj} 
\end{align}
\noindent where $f: \mathbb{R}^n \times \mathbb{R}^m \rightarrow \mathbb{R}$ is a known function in the decision $z \in \mathbb{R}^n$, and $Z \subseteq \mathbb{R}^n$ is a nonempty, compact set known with certainty (i.e., independent of $Y$), to which the decision $z$ must belong. In practice, neither the \emph{joint} distribution of $X$ and $Y$, nor the conditional distribution of $Y$ given $X = x$ are known and therefore, problem~\eqref{eq:ms0_obj} cannot be solved. \label{paragraph:rev1_R3.2} On top of that, even if the true distribution were known and the decision $z$ were fixed, problem~\eqref{eq:ms0_obj} would typically require to compute the expectation of a function of a continuous random vector (i.e., a multivariate integral), which is, in itself, a hard task in general. Instead, the only information that the decision maker typically has is a sample $S = \left\lbrace (y_{i}, x_{i}), \forall i \in \mathcal{N} \right\rbrace $ where $y_{i} \in \mathbb{R}^m$ is a particular outcome of the uncertainty $Y$ recorded under the context $x_{i} \in \mathcal{X}$, and $\mathcal{N}$ denotes the set of available samples. 

Against this background, problem~\eqref{eq:ms0_obj} is alternatively replaced with a \emph{surrogate} optimization problem, in the hope that the solution to the latter is good enough for the former. In this line, different approaches have been proposed to construct such a surrogate optimization problem. For instance, the traditional \emph{modus operandi} follows the rule ``first predict, then optimize,'' which results in the following \textit{surrogate problem} to approximate the solution to \eqref{eq:ms0_obj}:
\begin{align}
    \min_{z \in Z} & \enskip f(z; \hat{y})  \label{eq:surrogate} 
\end{align}
where $\hat{y}$ denotes an estimate of the outcome of the uncertainty $Y$ under the contextual information $x \in \mathcal{X} \subseteq \mathbb{R}^{p}$. The surrogate problem~\eqref{eq:surrogate} is attractive for several reasons. First and foremost, it is much simpler and faster to solve than~\eqref{eq:ms0_obj}. Actually, it is a \emph{deterministic} optimization problem that, as opposed to~\eqref{eq:ms0_obj},  only requires evaluating the cost function $f(z; \cdot)$ at the single value or scenario $\hat{y}$. Furthermore, problem~\eqref{eq:surrogate} seems intuitive and natural, especially when $\hat{y}$  represents ``the most likely value'' for $Y$ given $X = x$. Indeed, the  single scenario $\hat{y}$ is often chosen as an estimate of the expected value of the uncertainty $Y$ conditional on $X = x$, that is, $\hat{y} \approx \mathbb{E}[Y|X=x]$, where, logically, the approximation is built from the available sample $S = \left\lbrace (y_{i}, x_{i}), \forall i \in \mathcal{N} \right\rbrace$. In the realm of forecasting, the estimate $\hat{y}$  is usually referred to as a \emph{point prediction}. 

In order to build the estimate $\hat{y} \approx \mathbb{E}[Y|X=x]$, a function $g^{\text {FO}}: \mathcal{X} \times \mathbb{R}^q \rightarrow \mathbb{R}^m$ is normally chosen from a $w$-parameterized family $G^{\text {FO}}$, with $w \in \mathbb{R}^q$, to construct the forecasting model $\hat{y} = g^{\text {FO}}(x; w)$. The goodness of a certain parameter vector $w$ is quantified in terms of a loss function $l^{\text {FO}}(y, \hat{y}): \mathcal{Y} \times \mathbb{R}^m \rightarrow \mathbb{R}$ that measures the accuracy of the estimate. 
Then, given the sample $S = \left\lbrace (y_{i}, x_{i}), \forall i \in \mathcal{N} \right\rbrace$, the choice of $w$ is driven by the minimization of the \emph{in-sample} loss, as expressed below: 
\begin{align}
    w^{\text {FO}} \in \arg \min_{w \in \mathbb{R}^q} \sum_{i \in \mathcal{N}} l^{\text {FO}}(y_{i},g^{\text {FO}}(x_{i}; w)) \label{eq:forecasting_step_theo}
\end{align}

In this framework, the optimal decision $z^{\text {FO}}$ under the context $X = x$ is thus obtained by solving the following deterministic problem:
\begin{align}
    z^{\text {FO}}(x) \in \arg \min_{z \in Z} & \enskip f(z; g^{\text {FO}}(x;w^{\text {FO}})) \label{eq:md1_obj} 
\end{align}

We refer to this approach, which relies on a \emph{good} forecast of the uncertainty $Y$ (in particular, an estimate of $\mathbb{E}[Y|X=x]$), as FO (short for FOrecasting). Even though this approach is intuitive and may perform relatively well in many situations, it is fundamentally flawed for the following two basic reasons. First, since $\hat{y} \approx \mathbb{E}[Y|X=x]$ in FO, the surrogate problem~\eqref{eq:surrogate} works as a proxy of the problem  
\begin{align}
    \min_{z \in Z} & \enskip f(z; \mathbb{E}[Y|X=x]) 
\end{align}
which, in general, is not equivalent to~\eqref{eq:ms0_obj}. Second, even in those cases where these two problems are indeed equivalent, the loss function $l^{\text {FO}}$ that is typically used to compute $w^{\text {FO}}$ (for example, the squared error) is solely intended to get a statistically good estimate of $\mathbb{E}[Y|X=x]$ and does not account for the nominal objective $f$ or the constraints that the decision $z$ must satisfy. For instance, approach \eqref{eq:forecasting_step_theo}-\eqref{eq:md1_obj} is unable to capture that overestimating $\mathbb{E}[Y|X=x]$ might worsen the objective function $f$ much more than underestimating it.

In view of these design flaws, a number of works have proposed to replace the problem-agnostic $l^{\text {FO}}$ that is generally used in \eqref{eq:forecasting_step_theo} with a problem-aware loss function $l^{\text {SP}}(y,\hat{y}) = f(\dot{z}(\hat{y}); y)$ where $l^{\text {SP}}: \mathbb{R}^m \times \mathbb{R}^m \rightarrow \mathbb{R}$ and $\dot{z}: \mathcal{Y} \rightarrow Z$ defined as $\dot{z}(y) = \arg \min_{z \in Z} f(z; y)$. Therefore, function $l^{\text {SP}}$ evaluates the loss of optimality associated with the decision $\dot{z}(\hat{y})$ that is prescribed by the surrogate decision-making problem \eqref{eq:surrogate} for the single value $\hat{y}$. Accordingly, the optimal parameter vector $w^{\text {SP}}$ is obtained as the one that minimizes the \emph{in-sample} optimality loss, that is:
\begin{align}
    w^{\text {SP}} \in \arg \min_{w \in \mathbb{R}^q} & \sum_{i \in \mathcal{N}} f(\dot{z}(g^{\text {SP}}(x_{i};w)); y_{i})  \label{eq:w_spto}
\end{align}
\noindent where the function $g^{\text {SP}}: \mathcal{X} \times \mathbb{R}^q \rightarrow \mathbb{R}^m$ is chosen from a family of functions $G^{\text {SP}}$. We use the acronym SP, which stands for ``Smart Predict'', to refer to this setup. Solving problem \eqref{eq:w_spto} using descent optimization methods requires to compute the gradient of the loss function $l^{\text {SP}}(y,\hat{y})$ with respect to $w$. This may not be feasible, since it involves the differentiation of the discontinuous function $\dot{z}(y)$ \cite{Mandi2019}. To overcome this difficulty, a great deal of research has been devoted to finding methods to approximate the gradient of \eqref{eq:w_spto} for particular instances. The work developed in \cite{Kao2009}, for example, describes a procedure to solve \eqref{eq:w_spto} under the following three conditions: i) $f$ is quadratic, ii) the uncertainty is only present in the coefficients of the linear terms of $f$, and iii) no constraints are imposed on the decision $z$, which means $Z = \mathbb{R}^n$. Some years later, the authors \cite{Donti2017} proposed a heuristic gradient-based procedure to solve \eqref{eq:w_spto} for strongly convex problems with deterministic equality constraints and inequality chance constraints. Almost concurrently, reference \cite{Elmachtoub2017} discusses the difficulties of solving \eqref{eq:w_spto} in the case of linear problems, since such a formulation may lead to an uninformative loss function. To overcome this issue, they successfully develop a convex surrogate that allows to efficiently train $g^{\text {SP}}(x_{i};w)$ in the linear case.  More recently, the authors in \cite{Wilder2019} suggest a similar approach as in \cite{Donti2017} to combinatorial problems with a regularized linear objective function.

In summary, the four references above propose ad-hoc gradient methods for specific instances of \eqref{eq:w_spto}. However, the technical literature lacks, to the best of our knowledge, a general-purpose procedure to solve such a problem using available optimization solvers. To fill this gap, we propose the following bilevel program \cite{Dempe2017} as a generic mathematical formulation of \eqref{eq:w_spto}:
\begin{subequations}
\begin{align}
    w^{\text {BL}} \in \arg \min_{w \in \mathbb{R}^q{\color{black};\,\hat{z}_{i}}  } &\ \sum_{i \in \mathcal{N}} f(\hat{z}_{i}; y_{i}) \label{eq:spto_upper}\\
    \text{s.t.} & \enskip \hat{z}_{i} \in \arg \min_{z \in Z} f(z; g^{\text {BL}}(x_{i}; w)), \enskip \forall i \in \mathcal{N} \label{eq:spto_lower}
\end{align}\label{eq:bilevel_framework}
\end{subequations}
\noindent \textcolor{black}{where $g^{\text {BL}}: \mathcal{X} \times \mathbb{R}^q \rightarrow \mathbb{R}^m$ is selected similarly to $g^{\text{FO}}$ and $g^{\text{SP}}$.} Problem \eqref{eq:bilevel_framework} is formulated as a bilevel optimization model commonly used to mathematically characterize non-cooperative and sequential Stackelberg games in which the \textit{leader} makes her decisions anticipating the reaction of the \textit{follower} \cite{Labbe2019}. In this sense, the upper-level problem determines the optimal parameter vector $w$ anticipating the decision provided by each lower-level problem \eqref{eq:spto_lower} if the value $\hat{y}_i$ is given by $g^{\text {BL}}(x_{i};w)$. We denote this approach based on bilevel programming as BL (acronym of BiLevel). In Section \ref{subsec:solution_strategy}, we discuss the assumptions that problem \eqref{eq:ms0_obj} must satisfy so that problem \eqref{eq:bilevel_framework} can be reformulated as a single-level optimization problem to be solved using off-the-shelf optimization solvers. \textcolor{black}{Although solving the bilevel problem \eqref{eq:bilevel_framework} may be computationally expensive, this is a task that can be performed offline.} Once $w^{\text {BL}}$ is determined, the optimal decision $z^{\text {BL}}$ under context $X=x$ is computed by solving the following problem:
\begin{align}
    z^{\text {BL}}(x) \in \arg \min_{z \in Z} & \enskip f(z; g^{\text {BL}}(x;w^{\text {BL}})) \label{eq:determ_bl}
\end{align}

\textcolor{black}{The bilevel program \eqref{eq:spto_upper}--\eqref{eq:spto_lower} computes the value for the parameter vector $w$ that maximizes the \emph{in-sample} performance of the surrogate decision-making model~\eqref{eq:determ_bl}. For this estimation to be of use, it must be guaranteed that under two contexts $x_i$, $x_i'$, such that $x_i = x_i'$, it holds $\hat{z}_i = \hat{z}_i'$, i.e., under equal contexts, equal decisions. This is a condition that is reminiscent of the notion of \emph{non-anticipativity} in Stochastic Programming. Importantly, this condition is automatically satisfied if the solution to the lower-level problem~\eqref{eq:spto_lower} is unique for any value of $w$. Otherwise, the bilevel program \eqref{eq:spto_upper}--\eqref{eq:spto_lower} would choose the $\hat{z}_i$ from the optimal solution set of \eqref{eq:spto_lower} that minimizes the upper-level objective function \eqref{eq:spto_upper} \emph{given} ---i.e., by anticipating--- the uncertainty outcome $y_i$. This is so because the bilevel program \eqref{eq:spto_upper}--\eqref{eq:spto_lower}, as we have formulated it, delivers the optimistic Stackelberg solution.}
For instance, let us assume that there exists a value $\tilde{w}$ such that $f(z;g^{\rm BL}(x_i;\tilde{w})) = \vartheta$ for all $i \in \mathcal{N}$, where $\vartheta$ is a constant. In this case, the lower-levels \eqref{eq:spto_lower} boil down to feasibility problems imposing that $z\in Z$ and therefore, $\hat{z}_i$ can violate non-anticipativity and adapt to realization $y_i$ for all $i \in \mathcal{N}$. More importantly, using $\tilde{w}$ in \eqref{eq:determ_bl} would lead to degenerate and highly suboptimal decisions under any context $X=x$. This issue is reported in \cite{Elmachtoub2017} for linear objective functions, where authors propose a convex surrogate function of $l^{\text{SP}}$ to train meaningful instances of model ${g^{\text{SP}}(\cdot; w^{\text{SP}})}$. Similarly, we propose in Section \ref{sec:strategic}  a modified lower-level surrogate model for the strategic producer problem in order to ensure non-anticipativity for any parameter vector $w$.




Next, we discuss other surrogate decision-making models different from \eqref{eq:surrogate}, which have also been recently proposed to approximate the solution of \eqref{eq:ms0_obj}. For this purpose, notice first that problem \eqref{eq:ms0_obj} can be equivalently recast as
\begin{align}
    \min_{z \in Z} & \enskip \mathbb{E} [f(z; Y) | X = x] = \min_{z \in Z}  \enskip \mathbb{E}_{\mathcal{Q}_{|x}} [f(z; Y)] \label{eq:ms0_obj_bis} 
\end{align}
where $\mathcal{Q}_{|x}$ represents the conditional probability distribution of $Y$ given $X=x$. Thus, a second family of surrogate decision-making models can be introduced with the following general form:
\begin{align}
    \min_{z \in Z}  \enskip \mathbb{E}_{\widehat{\mathcal{Q}}_{|x}} [f(z; Y)] \label{eq:surrogate_Bert} 
\end{align}
where $\widehat{\mathcal{Q}}_{|x}$ is an approximation of the unknown probability measure $\mathcal{Q}_{|x}$ that is constructed from the available sample $S = \left\lbrace (y_{i}, x_{i}), \forall i \in \mathcal{N} \right\rbrace$. For the surrogate problem~\eqref{eq:surrogate_Bert} to be computationally tractable, the proxy $\widehat{\mathcal{Q}}_{|x}$ is often built as a discrete probability distribution supported on a finite number of points, more specifically, on the $y$-locations of the sample, i.e., $\{y_{i}, \forall i \in \mathcal{N}\}$. This way, the solution to~\eqref{eq:surrogate_Bert} under context $X = x$, which we denote as $z^{\text {ML}}(x)$, can be generically expressed as:
\begin{align}
    z^{\text {ML}}(x) \in \arg \min_{z \in Z} & \enskip \sum_{i \in \mathcal{N}} g^{\text {ML}}(x, x_{i}; w) f(z; y_{i})\label{SAA}
\end{align}
with $\{g^{\text {ML}}(x, x_{i}; w), \forall i \in \mathcal{N}\}$ being the probability masses that the specific proxy $\widehat{\mathcal{Q}}_{|x}$ that is used places on $\{y_{i}, \forall i \in \mathcal{N}\}$. These masses or weights are determined as a function $g^{\text {ML}}: \mathcal{X} \times \mathcal{X} \times \mathbb{R}^q \rightarrow \mathbb{R}$ of the historical contextual information $x_{i}$, the current context $x$, and some parameters $w$.



In essence, this scheme adapts the Sample Average Approximation (a well-known data-driven solution strategy in Stochastic Programming \cite{kleywegt2002sample, ruszczynski2003stochastic}) to the case of \emph{conditional} stochastic programs. It was first formalized in \cite{Bertsimas2020} and, since then, has been subject to a number of improvements (e.g., regularization procedures for bias-variance reduction \cite{Ho2019}; robustification \cite{Bertsimas2017}; and algorithmic upgrades \cite{Diao2020}) and extensions, e.g., to a dynamic decision-making setting \cite{Bertsimas2019dynamic}. Recently, the work in \cite{Mundru2019} introduces a bilevel formulation to optimally tune the parameters $w$ that determine the weights $g^{\text {ML}}(x,x_{i};w)$. Using our notation, the method proposed in \cite{Mundru2019} can be formulated as follows:
\begin{subequations}
\begin{align}
    w^{\text {ML}} \in \arg \min_{w \in \mathbb{R}^q{\color{black};\, \hat{z}_{i}}} & \ \sum_{i \in \mathcal{N}} f(\hat{z}_{i}; y_{i}) \label{eq:b_theor_upper}\\
    \text{s.t.} & \enskip \hat{z}_{i} \in \arg \min_{z \in Z} \sum_{i'\in \mathcal{N}:i'\neq i} g^{\text {ML}}(x_{i}, x_{i'}; w) f(z; y_{i'}), \enskip \forall i \in \mathcal{N} \label{eq:b_theor_lower}
\end{align} \label{eq:b_theor}
\end{subequations}

\noindent where the function $g^{\text {ML}}: \mathcal{X} \times\mathcal{X} \times \mathbb{R}^q \rightarrow \mathbb{R}$ used to compute the weights can be chosen from a catalog of several classical machine learning algorithms $G^{\text {ML}}$ such as $k$-nearest neighbors, Nadaraya-Watson kernel regression or Random Forest. \textcolor{black}{The author of \cite{Mundru2019} resorts} to tailor-made approximations and greedy algorithms for each machine learning technique that is used to construct function $g^{\text {ML}}$, but do not provide a general-purpose solution strategy valid for any function $g^{\text {ML}}$. This approach, which is based on machine learning techniques, is called ML (stands for Machine Learning). After solving \eqref{eq:b_theor}, the optimal decision $z^{\text {ML}}(x)$ under context $X = x$ is obtained by solving \eqref{SAA} with $w=w^{\text {ML}}$.

\label{paragraph:rev1_R1.3_start} The surrogate problems~\eqref{eq:surrogate} and~\eqref{eq:surrogate_Bert} are, by design, different, in part because they are the result of distinct frameworks to address the conditional stochastic program~\eqref{eq:ms0_obj}. The surrogate problem~\eqref{eq:surrogate} is based on the assumption that it is possible to find a good decision $z$ in terms of the conditional expected cost $\mathbb{E} [f(z; Y) | X = x]$ by optimizing that decision for a single scenario $\hat{y}$ of the uncertainty $Y$. Naturally, all the complexity of this approach lies in how to infer, from the data sample $S$, the single scenario $\hat{y}$ that unlocks the best decision $z$. This inference process makes use of global methods that consider all data points in the sample to obtain more robust decision mappings. In contrast, all the difficulty of the surrogate problem~\eqref{eq:surrogate_Bert} rests on how to retrieve a good approximation of the true conditional distribution $\mathcal{Q}_{|x}$ from the sample $S$. Such an approximation is performed using local machine learning methods that only employ data close to the given context $x$ and consequently, a large amount of data is required to avoid overfitting. In more practical terms, embedding local machine learning methods into the estimation problem \eqref{eq:b_theor} makes this problem computationally intractable in most cases. Besides, the surrogate problem~\eqref{eq:surrogate} is computationally less demanding than \eqref{eq:surrogate_Bert}, because the latter requires evaluating the cost function $f(z; \cdot)$ for multiple values of the uncertainty $Y$. \label{paragraph:rev1_R1.3_end}

Finally, there is a third class of surrogate decision-making models that arises from the idea of using the sample $S$ to directly learn the optimal decision $z$ as a function of the context $x$, this way bypassing the need for constructing the estimate $\hat{y}$ or the proxy distribution $\widehat{\mathcal{Q}}_{|x}$. Following this logic, we seek a decision rule or mapping $g^{\text {DR}}: \mathcal{X} \times \mathbb{R}^q \rightarrow \mathbb{R}^n$ from a family $G^{\text {DR}}$ so that $\arg \min_{z \in Z} \mathbb{E}[f(z;Y)|X=x] \approx \hat{z} = g^{\text {DR}}(x;w)$. Particularizing for the empirical distribution of the data, this approach renders:
\begin{subequations}
\begin{align}
    w^{\text {DR}} \in \arg \min_{w \in \mathbb{R}^q} & \sum_{i \in \mathcal{N}} f(g^{\text {DR}}(x_{i};w); y_{i}) \label{eq:approach_rule0} \\
    \text{s.t.} & \enskip g^{\text {DR}}(x_{i};w) \in Z, \enskip \forall i \in \mathcal{N} \label{eq:approach_rule_1}
\end{align} \label{eq:decision_rule_framework}
\end{subequations}

One clear advantage of directly learning the optimal decision policy is that, after solving \eqref{eq:decision_rule_framework}, the decision $z^{\rm DR}$ to be implemented under context $X = x$ is efficiently computed as follows:
\begin{align}
    z^{\text {DR}}(x) = g^{\rm DR}(x;w^{\text {DR}}) \label{eq:deci_rule}
\end{align}
Actually, the mapping~\eqref{eq:deci_rule} constitutes the surrogate decision-making model itself. This method, which aims at determining an optimal decision rule, is denoted as DR (acronym of Decision Rule). Nevertheless, feasibility issues may arise as this approach does not necessarily guarantee that the resulting $z^{\text {DR}}$ obtained through \eqref{eq:deci_rule} belongs to $Z$ for any plausible context $x$. The authors of \cite{Ban2019} propose and investigate this approach for the popular newsvendor problem, for which they consider a linear decision rule. Their newsvendor formulation does not involve any constraint and therefore, decisions yielded by \eqref{eq:deci_rule} are always valid. However, the use of this approach is questionable for many other practical applications in which decisions must satisfy a set of constraints.  

In summary, the contributions of the proposed bilevel model \eqref{eq:bilevel_framework} with respect to the other approaches presented in this section are:
\begin{itemize}
    \item[-] Unlike the traditional approach \eqref{eq:forecasting_step_theo}, ours provides estimations of $y$ by leveraging information about the optimization problem to be solved.
    \item[-] \label{paragraph:rev1_R3.13} Unlike the existing ``predict-then-optimize'' methodology \eqref{eq:w_spto}, our approach is formulated as a generic bilevel optimization that, under convexity assumptions, is reformulated as a single-level optimization problem that can be solved using off-the-shelf optimization software.
    \item[-] Unlike approach \eqref{eq:b_theor}, ours makes uses of global estimation methods that use all available data to infer the point forecast of the uncertainty that unlocks the best decision. Therefore, our approach is less prone to overfitting, especially for small data samples. In addition, formulation \eqref{eq:b_theor} is more difficult to solve than \eqref{eq:bilevel_framework}.
    \item[-] Unlike approach  \eqref{eq:decision_rule_framework}, ours guarantees the feasibility of the resulting optimal decision under any context.
\end{itemize}

\section{Solution strategy} \label{subsec:solution_strategy}

In this section, we elaborate on how to solve the general-purpose bilevel program~\eqref{eq:bilevel_framework} we propose to compute the best single scenario $\hat{y}$ to be fed into the surrogate problem \eqref{eq:surrogate}. To do so, we particularize the generic formulation \eqref{eq:ms0_obj} as follows:
\begin{subequations}
\begin{align}
    \min_{z,s(Y)} & \enskip \mathbb{E} [f_0(z,s(Y); Y) | X = x] \label{eq:stochastic_two_stages_obj} \\
    \text{s.t.} & \enskip f_{j}(z,s(Y);Y) \le 0, \enskip \forall j \in J \label{eq:inequality} \\
    & \enskip h_{k}(z,s(Y);Y) = 0,  \enskip \forall k \in K \label{eq:equality}
\end{align} \label{eq:stochastic_two_stages}
\end{subequations}
\noindent where $z$ constitutes the vector of \emph{here-and-now} variables independent of the uncertainty, $s(Y)$ represents the \emph{wait-and-see} decisions, and constraints \eqref{eq:inequality}, \eqref{eq:equality} must be satisfied for $\mathcal{Q}_{|x}$-almost all $y$ (i.e., with probability one). We also assume that $f_{0},f_{j}$ are convex functions with respect to all variables, $h_k$ are affine functions, and function $g^{\text {BL}}$ is continuous in the parameter vector $w$. 

Our method solves the following bilevel optimization problem:
\begin{subequations}
\begin{align}
    w^{\text {BL}} \in & \arg \min_{w \in \mathbb{R}^q {\color{black};\, \hat{z}_i}} \  \sum_{i \in \mathcal{N}} f_0(\hat{z}_i,\hat{s}_i;y_i) \\
    & \text{s.t.} \enskip f_{j}(\hat{z}_i,\hat{s}_i;y_i) \le 0, \enskip \forall j \in J, \forall i \in \mathcal{N} \\
    & \phantom{\text{s.t.}} \enskip h_{k}(\hat{z}_i,\hat{s}_i;y_i) = 0, \enskip \forall k \in K, \forall i \in \mathcal{N} \\
    & \phantom{\text{s.t.}} \enskip \hat{z}_{i} \in \{ \arg \min_{z,s} \enskip f_0(z,s;g^{\text {BL}}(x_{i}; w)) \label{eq:bilevel_two_stage_lower_fo}\\
    & \hspace{18mm} \text{s.t.} \enskip f_{j}(z,s;g^{\text {BL}}(x_{i}; w)) \le 0, \enskip \forall j \in J \label{eq:bilevel_two_stage_lower_ineq}\\
    & \hspace{25mm} h_{k}(z,s;g^{\text {BL}}(x_{i}; w)) = 0, \enskip \forall k \in K \}, \forall i \in \mathcal{N} \label{eq:bilevel_two_stage_lower_eq}
\end{align}\label{eq:bilevel_two_stage}
\end{subequations}

On the assumption that the lower-level minimization problems \eqref{eq:bilevel_two_stage_lower_fo}--\eqref{eq:bilevel_two_stage_lower_eq} satisfy some constraint qualification, the classical strategy to solve \eqref{eq:bilevel_two_stage} is to replace each lower level \eqref{eq:bilevel_two_stage_lower_fo}--\eqref{eq:bilevel_two_stage_lower_eq} with its equivalent Karush-Kuhn-Tucker (KKT) conditions \cite{Boyd2004}, that is,
\begin{subequations}
\begin{align}
    w^{\text {BL}} \in & \arg \min_{w \in \mathbb{R}^q {\color{black}; \, \hat{z}_i, \lambda_{ji}}}\ \sum_{i \in \mathcal{N}} f_0(\hat{z}_i,\hat{s}_i;y_i) \label{eq:kkt_obj}\\
    &\text{s.t.} \enskip f_{j}(\hat{z}_i,\hat{s}_i;y_i) \le 0, \enskip \forall j \in J, \forall i \in \mathcal{N} \label{eq:kkt_ineq} \\
    & \phantom{\text{s.t.}} \enskip h_{k}(\hat{z}_i,\hat{s}_i;y_i) = 0, \enskip \forall k \in K, \forall i \in \mathcal{N} \label{eq:kkt_eq}\\
    & \phantom{\text{s.t.}} \enskip \nabla f_0(\hat{z}_{i},s_i; g^{\text {BL}}(x_{i}; w)) + \sum_{j\in J} \lambda_{ji} \nabla f_{j}(\hat{z}_{i},s_i;g^{\text {BL}}(x_{i};w)) + \nonumber \\
    &\phantom{s.t.} \qquad + \sum_{k\in K} \upsilon_{ki} \nabla h_{k}(\hat{z}_{i},s_i;g^{\text {BL}}(x_{i};w)) = 0, \enskip \forall i \in \mathcal{N} \label{eq:stationarity} \\
    &\phantom{s.t.} \enskip f_{j}(\hat{z}_{i},s_i;g^{\text {BL}}(x_{i};w)) \le 0,  \enskip \forall j\in J, \enskip \forall i \in \mathcal{N} \label{eq:primal_feas_f} \\
    &\phantom{s.t.} \enskip h_{k}(\hat{z}_{i},s_i;g^{\text {BL}}(x_{i};w)) = 0,  \enskip \forall k \in K, \enskip \forall i \in \mathcal{N}  \label{eq:primal_feas_h} \\
    &\phantom{s.t.} \enskip \lambda_{ji} \ge 0, \enskip \forall j \in J, \enskip \forall i \in \mathcal{N} \label{eq:dual_feas} \\
    &\phantom{s.t.} \enskip \lambda_{ji} f_{j}(\hat{z}_{i},s_i;g^{\text {BL}}(x_{i};w)) = 0, \enskip \forall j \in J, \enskip \forall i \in \mathcal{N} \label{eq:slackness} 
    \end{align} \label{eq:kkt_reformulation}
\end{subequations}

where $\lambda_{ji}, \upsilon_{ki} \in \mathbb{R}$ are, respectively, the Lagrange multipliers related to constraints \eqref{eq:bilevel_two_stage_lower_ineq} and \eqref{eq:bilevel_two_stage_lower_eq} for each lower-level problem; \eqref{eq:kkt_obj} and \eqref{eq:kkt_ineq}-\eqref{eq:kkt_eq} are, in that order,  the objective function and constraints of the upper-level problem, and constrains \eqref{eq:stationarity}, \eqref{eq:primal_feas_f}-\eqref{eq:primal_feas_h}, \eqref{eq:dual_feas}, \eqref{eq:slackness}, are, respectively, the stationarity, primal feasibility, dual feasibility and slackness conditions of the lower-level problems. As discussed in \cite{Scheel2000}, problem \eqref{eq:kkt_reformulation} violates the Mangasarian-Fromovitz constraint qualification at every feasible point and therefore, interior-point methods fails to find even a local optimal solution to this problem. To overcome this issue, a regularization approach was first introduced in \cite{Scholtes2001} and further investigated in \cite{Ralph2004}. This method replaces all complementarity constraints \eqref{eq:slackness} with inequality \eqref{eq:slackness2} below: 
\begin{subequations}
\begin{align}
    w^{\text {BL}} \in & \arg \min_{w \in \mathbb{R}^q{\color{black}; \, \hat{z}_i, \lambda_{ji}}}\  \sum_{i \in \mathcal{N}} f_0(\hat{z}_i,\hat{s}_i;y_i) \label{eq:kkt_obj_reg}\\
    &\text{s.t.} \enskip \eqref{eq:kkt_ineq}- \eqref{eq:dual_feas}\\
    &\phantom{s.t.} \enskip - \sum_{j\in J} \lambda_{ji} f_{j}(\hat{z}_{i},s_i;g^{\text {BL}}(x_{i};w)) \leq \epsilon, \enskip \forall i \in \mathcal{N} \label{eq:slackness2} 
    \end{align} \label{eq:kkt_reformulation_reg}
\end{subequations}

\noindent where $\epsilon$ is a small non-negative scalar that allows to reformulate \eqref{eq:kkt_reformulation} as the parametrized nonlinear optimization problem \eqref{eq:kkt_reformulation_reg}, which typically satisfies a constraint qualification and can be then efficiently solved by standard non-linear optimization solvers. Authors of \cite{Scholtes2001} prove that, as $\epsilon$ tends to 0, the solution of \eqref{eq:kkt_reformulation_reg} tends to a \emph{local} optimal solution of \eqref{eq:kkt_reformulation}. In the remaining of the manuscript, we will refer to this approach as BL-R.

Alternatively, the complementarity slackness conditions can be linearized according to Fortuny-Amat \cite{Fortuny1981} as follows:
\begin{subequations}
\begin{align}
    w^{\text {BL}} \in & \arg \min_{w \in \mathbb{R}^q{\color{black}; \, \hat{z}_i, \lambda_{ji}, u_{ji}}}  \ \sum_{i \in \mathcal{N}} f_0(\hat{z}_{i},\hat{s}_i; y_{i}) \label{eq:kkt_obj_fa}\\
    &\text{s.t.} \enskip \eqref{eq:kkt_ineq}- \eqref{eq:dual_feas}\\
    &\phantom{s.t.} \enskip \lambda_{ji} \le u_{ji} M^{D}, \enskip \forall j\in J, \enskip \forall i \in \mathcal{N}\\
    &\phantom{s.t.} \enskip f_{j}(\hat{z}_{i},s_i;g^{\text {BL}}(x_{i};w)) \ge ( u_{ji} - 1 ) M^{P}, \enskip \forall j\in J, \enskip \forall i \in \mathcal{N} \label{eq:amat_dual2} \\
    &\phantom{s.t.} \enskip u_{ji} \in \{0, 1\}, \enskip \forall j\in J, \enskip \forall i \in \mathcal{N}  \label{eq:amat_binary}
\end{align} \label{eq:kkt_reformulation_fa}
\end{subequations}

\noindent where $u_{ji}$ are binary variables, and $M^{P}, M^{D} \in \mathbb{R}^+$ are large enough constants whose values can be determined as proposed in \cite{Pineda2018}. The resulting model \eqref{eq:kkt_reformulation_fa} is a single-level mixed-integer non-linear problem. We denote this method as BL-M.

Solving the bilevel problem \eqref{eq:bilevel_framework} using either BL-R or BL-M is valid for a conditional stochastic problem that satisfies the conditions described in this section. Nonetheless, the complexity of solving the regularized non-linear problem \eqref{eq:kkt_reformulation_reg} or the mixed-integer non-linear program \eqref{eq:kkt_reformulation_fa}  highly depends on functions $f_0,f_j,h_k,g^{\text {BL}}$. In some cases (see, for instance, the particular applications discussed in Section~\ref{sec:application}), problem \eqref{eq:kkt_reformulation_fa} can be reformulated as a mixed-integer linear/quadratic optimization problem that can be solved to global optimality using standard optimization solvers. In the general case, problems \eqref{eq:kkt_reformulation_reg} and \eqref{eq:kkt_reformulation_fa} can also be  solved using off-the-shelf optimization solvers, but global optimality may not be guaranteed. Notwithstanding this, local optimal solutions of the proposed bilevel formulation \eqref{eq:bilevel_framework} may still lead to optimal decisions that are significantly better than those computed by FO or DR.

\section{Applications}
\label{sec:application}

In Section \ref{sec:math_framework}, we introduce a common  mathematical framework to present five different approaches for contextual decision-making under uncertainty, namely, the \emph{predict-then-optimize} strategies FO, SP, and BL; method ML, which relies on a proxy of the true conditional distribution that is built using machine-learning techniques, and the decision-rule approach DR. Unfortunately, in the technical literature, methods SP and ML have only been applied to conditional stochastic optimization problems with a specific structure and they both lack a solution strategy for more general conditional stochastic programs.  For this reason, in this section, we limit ourselves to comparing  approaches FO, BL, and DR on various contextual decision-making problems under uncertainty, each of which illustrates a certain relevant aspect of our proposal. Subsection \ref{sec:newsvendor} compares these methodologies using the newsvendor problem, a well-known stochastic programming problem with simple recourse. The proposed methodology is also applied in Subsection~\ref{sec:placement} to the product placement problem, a two-stage stochastic programming problem with full recourse. Finally, Subsection \ref{sec:strategic} presents a strategic producer problem formulated as a one-stage stochastic programming in which the uncertainty only affects the objective function.

\subsection{Newsvendor problem}\label{sec:newsvendor}

We start with the popular newsvendor problem in the spirit of \cite{Ban2019}, a work that elicited renewed interest \cite{Mundru2019, bertsimas2018} in the solution to the conditional stochastic program~\eqref{eq:ms0_obj}. In the newsvendor problem, the goal of the decision maker is to find the optimal ordering quantity for a product with unknown random demand $Y$. In turn, this (positive) demand may be influenced by a random vector of features $X$ representing, for instance, product information, weather conditions, customer profiles, etc. The decision maker has, therefore, a collection of observations $\{(x_i,y_i), \forall i \in \mathcal{N}\}$, which s/he would like to exploit to make an informed ordering quantity $z$ under the context $X=x$. Let $d$ and $r$, with $r > d> 0$, be the cost and revenue of manufacturing and selling one product unit, respectively. This problem can be formulated as the following conditional stochastic program:
\begin{align}
    \min_{z \in \mathbb{R}} & \enskip \mathbb{E} [dz - r\min(z,Y) | X = x] \label{eq:newsvendor} 
\end{align}

Approaches FO and BL both follow a ``predict-then-optimize'' strategy, whereby the ordering quantity is obtained as the solution to the following surrogate decision-making model:
\begin{align}
    \min_{z \in \mathbb{R}} & \enskip dz - r\min(z,\hat{y}) \label{eq:newsvendor_surrogate} 
\end{align}
We can use an auxiliary variable $s$ to get rid of the inner minimization and write instead:
\begin{subequations}
\begin{align}
    \min_{z, s} & \enskip dz - rs \\
    \text{s.t.} & \enskip s \leq z \\
    \phantom{\text{s.t.}} & \enskip s \leq \hat{y}
\end{align} 
\label{eq:newsvendor_surrogate_ref}
\end{subequations}
whose solution is trivial, namely, $z^*=s^*=\hat{y}$.

FO and BL differ in the particular single value or scenario $\hat{y}$ that each of them uses. In the case of FO, $\hat{y}$ is an estimate of $\mathbb{E}[Y|X=x]$. Consequently, it becomes apparent that, for the newsvendor problem, approach FO is fundamentally inconsistent, because it is well-known that the solution to~\eqref{eq:newsvendor} corresponds to the quantile $\frac{r-d}{r}$ of the demand distribution $Y$ conditional on $X=x$. Naturally, this quantile is generally different from $\mathbb{E}[Y|X=x]$.

Now, if we take $\hat{y} = g^{\text {BL}}(x;w) = w^T x$ in our approach, the optimal vector of linear coefficients $w^{\rm BL}$ is computed as follows:
\begin{subequations}
\begin{align}
    w^{\rm BL} \in \arg \min_{w\in\mathbb{R}^{p}} & \enskip \sum_{i \in \mathcal{N}} d \hat{z}_i - r\min({\hat{z}_i,y_i}) \\
    \text{s.t.} & \enskip \hat{z}_{i} \in \{ \arg \min_{z_i,s_i} \enskip dz_i - rs_i  \\
    & \qquad\qquad\qquad \text{s.t.} \enskip s_i \leq z_i \\
    & \qquad\qquad\qquad \phantom{\text{s.t.}} \enskip s_i \leq w^T x_i \}, \forall i \in \mathcal{N}
\end{align} 
\label{eq:newsvendor_bilevel}
\end{subequations}
which, based on our previous argument, boils down to:
\begin{subequations}
\begin{align}
    w^{\rm BL} \in \arg \min_{w\in\mathbb{R}^{p}} & \enskip \sum_{i \in \mathcal{N}} d \hat{z}_i - r\min({\hat{z}_i,y_i}) \\
    \text{s.t.} & \enskip \hat{z}_{i} = w^T x_i, \enskip \forall i \in \mathcal{N}
\end{align} 
\label{eq:newsvendor_bilevel_reduced}
\end{subequations}
Therefore, our approach coincides exactly with that proposed in~\cite{Ban2019}, which, in turn, is given by problem~\eqref{eq:decision_rule_framework} in Section~\ref{sec:math_framework} when $g^{\text {DR}}(x;w) = w^T x$. This equivalence is far from being general though, as we will see with the other applications below.

\subsection{Product placement problem}\label{sec:placement}
Given a graph $\mathcal{G} = (\mathcal{B}, \mathcal{A})$ with node-arc matrix $A$, in the product placement problem, the goal is to decide the amount $z_b \in \mathbb{R}^{+}$ of a certain product to be placed in each node $b \in \mathcal{B}$ of the grid \cite{Bertsimas2020}. After this decision is made, the demand for the product at each node $y_b$ is realized, and the inventories of product throughout the network are shipped across the arcs $\mathcal{A}$ so as to satisfy the actually observed nodal demands. As in the newsvendor problem, these demands may be affected by some exogenous factors $X$ that may be also random, but that are disclosed before the product placement decision is to be made. Let $h \in \mathbb{R}^{|\mathcal{B}|}$ and $g \in \mathbb{R}^{|\mathcal{A}|}$ be the cost of initially placing products in the nodes of the network and the cost of shipping products through the edges of the graph, respectively. The product placement problem under uncertain demand, but with contextual information, can be formulated as follows:
\begin{align}
    \min_{z \geq 0} & \enskip \mathbb{E} [c(z; Y) | X = x] \label{eq:placement} 
\end{align}
where
\begin{subequations}
\begin{align}
    c(z; y) = h^Tz + & \min_{f\geq 0,p\geq 0} \enskip g^Tf + r^Tp \label{eq:placement2_obj} \\
    & \text{s.t.} \enskip Af \leq z - y + p \label{eq:placement2_con}
\end{align}\label{eq:placement2} 
\end{subequations}
In problem~\eqref{eq:placement2}, we have included a variable vector $p \in \mathbb{R}_{\geq 0}^{|\mathcal{B}|}$ to allow for unsatisfied demand, with the associated penalty cost $r \in \mathbb{R}^{|\mathcal{B}|}$. Furthermore, the decision vector $f \in \mathbb{R}^{|\mathcal{A}|}$ represents the amount of product shipped across the arcs of the network. The cost function~\eqref{eq:placement2_obj} takes the form of a two-stage linear cost, with the integration of a recourse problem. More importantly, unlike in the newsvendor problem, the recourse is given by a full-fledged (linear) minimization problem. The surrogate decision-making model associated with the \emph{predict-then-optimize} strategies FO and BL is as follows:
\begin{subequations}
\begin{align}
    \min_{z \geq 0, f\geq0, p\geq0} & \enskip h^Tz +  g^Tf + r^Tp  \\
    & \text{s.t.} \enskip Af \leq z - \hat{y} + p 
\end{align}\label{eq:placement_surrogate} 
\end{subequations}
To ease the exposition and the notation that follows, we make the additional assumption that $r > h >0$, where the inequality holds component-wise. In this case, variable vector $p$ in \eqref{eq:placement_surrogate} is zero at the optimum and the surrogate model can be simplified to:
\begin{subequations}
\begin{align}
    \min_{z \geq 0, f\geq0} & \enskip h^Tz +  g^Tf  \\
    & \text{s.t.} \enskip Af \leq z - \hat{y} 
\end{align}\label{eq:placement_surrogate_simplified} 
\end{subequations}

As previously discussed, problem~\eqref{eq:placement_surrogate_simplified} is a deterministic mathematical program whereby the decision $z$ is solely optimized for the point prediction of demand $\hat{y}$. While the traditional FO approach sets such a prediction to $\mathbb{E}[Y|X=x]$, the rationale behind the approach BL is to compute a $W$-parameterized function such that the surrogate problem~\eqref{eq:placement_surrogate_simplified} delivers the decision $z$ that minimizes the in-sample cost, that is:  
\begin{subequations}
\begin{align}
    W^{\rm BL} \in \arg \min_{W\in\mathbb{R}^{|\mathcal{B}|\times p}} & \enskip \sum_{i \in \mathcal{N}} h^T\hat{z}_i +  g^T\hat{f}_i + r^T\hat{p}_i  \\
    \text{s.t.} & \enskip A\hat{f}_i \leq \hat{z}_i - y_i + \hat{p}_i, \enskip \forall i \in \mathcal{N}  \\
    \phantom{\text{s.t.}} & \enskip \hat{f}_i, \hat{p}_i \geq 0, \enskip \forall i \in \mathcal{N}\\
    \phantom{\text{s.t.}} & \enskip \hat{z}_i \in \{ \arg \min_{z_i \geq 0, f_i\geq0} \enskip h^Tz_i +  g^Tf_i  \label{eq:placement_bilevel_obj}\\
    & \hspace{21mm} \text{s.t.} \enskip Af_i \leq z_i - Wx_i \}, \enskip \forall i \in \mathcal{N}  \label{eq:placement_bilevel_con} 
\end{align} \label{eq:placement_bilevel} 
\end{subequations}
where we have taken $\hat{y} = g^{\rm BL}(x; W) = W x$ with $W \in \mathbb{R}^{|\mathcal{B}| \times p}$. \textcolor{black}{As discussed in Section \ref{sec:math_framework}, the lower-level problem \eqref{eq:placement_bilevel_obj}--\eqref{eq:placement_bilevel_con} must have a unique solution. This can be guaranteed if, for example, all the shipping routes that can be taken to satisfy each demand in the graph entail a different cost. If this condition is not satisfied, the degeneracy of the lower-level problem can be eliminated by using classical results from the linear programming literature as described in \cite{Garcia2021}.} As stated in Section~\ref{subsec:solution_strategy},  the solution to \eqref{eq:placement_bilevel} can be addressed by replacing the lower-level linear program \eqref{eq:placement_bilevel_obj}--\eqref{eq:placement_bilevel_con} with its KKT optimality conditions: 
\begin{subequations}
\begin{align}
    W^{\rm BL} \in \arg \min_{W\in\mathbb{R}^{|\mathcal{B}|\times p}} & \enskip \sum_{i \in \mathcal{N}} h^T\hat{z}_i +  g^T\hat{f}_i + r^T\hat{p}_i  \\
    \text{s.t.} & \enskip A\hat{f}_i \leq \hat{z}_i - y_i + \hat{p}_i, \enskip \forall i \in \mathcal{N} \\
    \phantom{\text{s.t.}} & \enskip \hat{f}_i, \hat{p}_i \geq 0, \enskip \forall i \in \mathcal{N}\\
    \phantom{\text{s.t.}} & \enskip 0 \leq (h-\alpha_i) \perp \hat{z}_i \geq 0, \enskip \forall i \in \mathcal{N} \label{eq:placement_single_level_com1}\\
    \phantom{\text{s.t.}} & \enskip 0 \leq (g+A^T\alpha_i) \perp f_i \geq 0, \enskip \forall i \in \mathcal{N} \label{eq:placement_single_level_com2}\\
    \phantom{\text{s.t.}} & \enskip 0 \leq (\hat{z}_i-Af_i-Wx_i) \perp \alpha_i \geq 0, \enskip \forall i \in \mathcal{N} \label{eq:placement_single_level_com3}
\end{align} \label{eq:placement_single_level} 
\end{subequations}

where $\alpha_i \in \mathbb{R}^{|\mathcal{B}|}$ is the vector of Lagrange multipliers associated with constraint \eqref{eq:placement_bilevel_con}. Thus, problem \eqref{eq:placement_single_level} can be solved by regularizing the complementary slackness conditions or by using their Fortuny-Amat big-M reformulation. In the latter case, we arrive to a MIP problem that can be solved using commercial optimization software such as CPLEX or GUROBI.

Finally, if we also take a linear decision mapping $z(x) = g^{\rm DR}(x;W) = W x$ where $W \in \mathbb{R}^{|\mathcal{B}| \times p}$, the DR approach solves the following minimization problem to compute the optimal matrix of linear coefficients $W^{\rm DR}$:
\begin{subequations}
\begin{align}
    W^{\rm DR} \in \arg \min_{W\in\mathbb{R}^{|\mathcal{B}|\times p}} & \enskip \sum_{i \in \mathcal{N}} h^T\hat{z}_i +  g^T\hat{f}_i + r^T\hat{p}_i  \\
    \text{s.t.} & \enskip A\hat{f}_i \leq \hat{z}_i - y_i + \hat{p}_i, \enskip \forall i \in \mathcal{N}  \\
    \phantom{\text{s.t.}} & \enskip \hat{f}_i, \hat{p}_i \geq 0, \enskip \forall i \in \mathcal{N}\\
    \phantom{\text{s.t.}} & \enskip \hat{z}_i \geq 0, \enskip \forall i \in \mathcal{N} \label{eq:placement_rudyn_posz}\\
    \phantom{\text{s.t.}} & \enskip \hat{z}_i = Wx_i , \enskip \forall i \in \mathcal{N} \label{eq:placement_rudyn_rulez}  
\end{align} \label{eq:placement_rudyn} 
\end{subequations}
It is apparent that the estimation problems \eqref{eq:placement_single_level} and \eqref{eq:placement_rudyn}, which BL and DR solve, respectively, are structurally different and so are $W^{\rm BL}$ and $W^{\rm DR}$ in general. For instance, think of a graph for which $\min\{g_\ell\}_{\ell \in \mathcal{A}}>\max\{h_b\}_{b \in \mathcal{B}}$. This represents a network where it is always cheaper to satisfy the nodal demand $y_b$, $b \in \mathcal{B}$, through the amount $z_b$ of product that is initially placed at the demand location, that is, a graph where product shipping would be uneconomical if the nodal demands were certainly known in advance. Indeed, take the $\ell$-$th$ row of $g+A^T\alpha_i$ in equation~\eqref{eq:placement_single_level_com2} for any $i \in \mathcal{N}$, that is, $g_{\ell}+\alpha_{o(\ell),i}-\alpha_{e(\ell), i}$, where $o(\ell)$ and $e(\ell)$ denote the origin and end nodes of arc $\ell$, respectively. We have that $\inf\{g_{\ell}+\alpha_{o(\ell),i}-\alpha_{e(\ell),i}: \alpha_{o(\ell),i} \in [0, h_{o(\ell)}],\alpha_{e(\ell),i} \in [0, h_{e(\ell)}] \} = g_{\ell}- h_{e(\ell)} >0$. Hence, $f_{\ell} = 0, \forall \ell \in \mathcal{A}$ and the system of inequalities \eqref{eq:placement_single_level_com1}-\eqref{eq:placement_single_level_com3} boils down to:
\begin{subequations}
\begin{align}
& \enskip 0 \leq (h-\alpha_i) \perp \hat{z}_i \geq 0, \enskip \forall i \in \mathcal{N} \label{eq:placement_com1}\\
& \enskip 0 \leq (\hat{z}_i-Wx_i) \perp \alpha_i \geq 0, \enskip \forall i \in \mathcal{N}  \label{eq:placement_com2}
\end{align}
\end{subequations}
which, unlike \eqref{eq:placement_rudyn_posz}--\eqref{eq:placement_rudyn_rulez}, allows for feasible solutions in the form $\hat{z}_{b,i} = 0$ with $w_{b}^T x_i < 0$ (and $\alpha_{i,b} = 0$), where $w_{b}$ is the $b$-$th$ row of matrix $W$. Furthermore, recasting \eqref{eq:placement_rudyn_rulez} as $\hat{z}_i - Wx_i = 0$ and setting $\alpha_i = h, \enskip \forall i \in \mathcal{N}$, it is trivial to see that any feasible point of DR is also feasible for BL. Since the feasible region of \eqref{eq:placement_rudyn} is contained in the feasible region of \eqref{eq:placement_single_level}, but the opposite is not true, the optimum of \eqref{eq:placement_single_level} is in general lower than  that of \eqref{eq:placement_rudyn}.

\subsection{Strategic producer problem}\label{sec:strategic}

Here we apply our decision-making framework to the problem of  a strategic producer partaking in a forward market \cite{Allaz1993}. This strategic player must decide the produced quantity $q\in \mathbb{R}$ that maximizes her profits while facing some uncertainty on market conditions. Let $c(q): \mathbb{R} \rightarrow \mathbb{R}^{+}$ denote the generation cost function whose parameters are assumed to be known with certainty. Let $p(q;Y): \mathbb{R} \times \mathbb{R}^m \rightarrow \mathbb{R}$ represent the inverse demand function expressing the impact of the generation quantity $q$ on the good's price. For some goods such as electricity, the inverse demand function varies depending on the season of the year, the day of the week, or the hour of the day. Besides, this function is also uncertain when producers must make their generation decisions $q$, since it may depend, for example, on weather conditions. If $Q$ represents the known feasible region of variable $q$ according to technical or economic constraints, the strategic producer must solve the following conditional stochastic optimization problem: 
\begin{align}
    \min_{q \in Q} & \enskip \mathbb{E} [c(q) - p(q;Y) q | X=x ] \label{eq:prod_general_obj}
\end{align}

As it is customary, we assume that the price and the demand are linearly related as $p(q; \alpha,\beta) = \alpha - \beta q$ where $\alpha \in \mathbb{R}$ and  $\beta \in \mathbb{R}^{+}$ are unknown parameters. Similarly, we assume that the production cost is computed through a quadratic cost function $c(q) = c_2 q^{2} + c_1 q$ where $c_1,c_2>0$ are known parameters related, respectively, to proportional production costs (such as fuel cost) and the increase of marginal costs due to technological factors (such as efficiency loss) \cite{Djurovic2012}. In order to ease the notation, we define $\alpha' = \alpha - c_1$ and  $\beta' = \beta + c_2$. Finally, we consider that the production quantity $q$ is bounded by known capacity limits, i.e., $\underline{q} \leq q \leq \overline{q}$ with $\underline{q}, \overline{q} \in \mathbb{R}^{+}$.  Thus, problem \eqref{eq:prod_general_obj} can be reformulated as:
\begin{align}
    \min_{\underline{q} \le q \le \overline{q}} & \enskip \mathbb{E} [\beta' q^{2} - \alpha' q | X=x]  \label{eq:m_prod1_obj}
\end{align}

Since the quantity decision $q$ is independent of the outcome of the uncertainty $(\beta', \alpha')$, the above can be further simplified to:
\begin{align}
    \min_{\underline{q} \le q \le \overline{q}} & \enskip \mathbb{E} [\beta'|X=x] q^{2} - \mathbb{E} [\alpha'| X=x] q   \label{eq:m_prod2_obj}
\end{align}

Therefore, the optimal solution $q^*$ is driven by the conditional expected values of $\alpha'$ and $\beta'$. To be more precise, since $\beta' >0$, $q^*$ could be equivalently computed as follows:
\begin{equation}
    q^*(x) \in \arg\min_{\underline{q} \le q \le \overline{q}}  \enskip  q^{2} - \frac{\mathbb{E} [\alpha'|x]}{\mathbb{E} [\beta'|x]}\, q  \implies q^*(x) \in \left\{\underline{q}, \frac{\mathbb{E} [\alpha'|x]}{2\mathbb{E} [\beta'|x]}, \overline{q}\right\} \label{eq:optimal_q_expectation}
\end{equation}

Unfortunately, $\mathbb{E} [\alpha'|x]$ and $\mathbb{E} [\beta'|x]$ are both unknown and therefore, they need to be estimated somehow. {\color{black} As explained further in Section \ref{subsec:exp_setup}}, the producer has available a set of historical observations  $S = \left\lbrace (\alpha'_{i}, \beta'_{i}, x_{i}),  \forall i \in \mathcal{N} \right \rbrace$ with $\alpha'_i \in \mathbb{R}$, $\beta'_{i} \in \mathbb{R}^+$ and $x_{i} \in \mathbb{R}^p$ {\color{black}in order to accomplish such a task}.
At this point, it should be underlined that the strategic producer problem~\eqref{eq:prod_general_obj} is of a distinctly different nature from that of the newsvendor problem~\eqref{eq:newsvendor} and the product placement problem~\eqref{eq:placement}. Indeed, the conditional stochastic program~\eqref{eq:prod_general_obj} \emph{has no recourse} and the uncertain parameters appear only in its objective function. Consequently, solving~\eqref{eq:prod_general_obj} is apparently as ``simple'' as estimating the two conditional expectations $\mathbb{E} [\alpha'|x]$ and $\mathbb{E} [\beta'|x]$. Our claim, however, is that the way the producer draws decisions from a \emph{finite data sample} (all we usually have in practice) may have a significant impact on the actual expected performance of the producer's strategy. Actually, the best estimates of $\mathbb{E} [\alpha'|x]$ and $\mathbb{E} [\beta'|x]$ from a \emph{statistical} sense do not necessarily result in the best offer $q$. 


According to the \textit{predict-then-optimize} strategies, the surrogate model of this problem is formulated as follows:
\begin{align}
    \min_{\underline{q} \le q \le \overline{q}} & \enskip \hat{\beta}' q^{2} - \hat{\alpha}' q   \label{eq:m_prod_surrogate}
\end{align}

%

As explained in Section \ref{sec:math_framework}, the traditional approach aims at learning the uncertain parameters $\alpha'_{i},\beta'_{i}$ as a function of the available information $x_{i}$. If we assume the family of linear functions, that is, $\hat{\alpha}'_{i} = w_{\alpha}^T x_{i}$, $\hat{\beta}'_{i} = w_{\beta}^T x_{i}$ with $w_{\alpha},w_{\beta}\in\mathbb{R}^p$, and we choose the squared error as the loss function $l^{\text {FO}}$, then the standard implementation of \eqref{eq:forecasting_step_theo} is: 
%
%
\begin{subequations}
\begin{align}
& w_{\alpha}^{\text {FO}} \in \arg \min_{w_{\alpha}\in\mathbb{R}^p} \sum_{i \in \mathcal{N}} (\alpha'_{i} - w_{\alpha}^T x_{i})^2 \label{eq:prod_forecasting_alpha}\\
& w_{\beta}^{\text {FO}} \in \arg \min_{w_{\beta}\in\mathbb{R}^p} \sum_{i \in \mathcal{N}} (\beta'_{i} - w_{\beta}^T x_{i})^2 \label{eq:prod_forecasting_beta}
\end{align} \label{eq:prod_forecasting}
\end{subequations}

The optimal quantity under context $X=x$ is the solution to the following optimization problem:
\begin{align}
     q^{\text {FO}}(x) \in \arg \min_{\underline{q} \le q \le \overline{q} } \enskip  (w_{\beta}^{\text {FO}})^T x q^{2} - (w_{\alpha}^{\text {FO}})^T x q \implies q^{\text {FO}}(x) \in \left\{\underline{q}, \frac{(w_{\alpha}^{\text {FO}})^T x}{2(w_{\beta}^{\text {FO}})^T x}, \overline{q}\right\} \label{eq:prod_forecasting_new_x}
\end{align}

Alternatively, $w_{\alpha}$ and $w_{\beta}$ can be determined following the proposed approach by solving the following bilevel formulation:
\begin{subequations}
\begin{align}
    w_{\alpha}^{\text {BL}}, w_{\beta}^{\text {BL}} \in \arg \min_{w_{\alpha},w_{\beta}\in\mathbb{R}^p} & \enskip \sum_{i \in \mathcal{N}} \beta'_{i} \hat{q}^{2}_{i} - \alpha'_{i} \hat{q}_{i} \label{eq:prod_bilevel_obj_up}\\
    \text{s.t.} & \enskip \hat{q}_{i} \in \arg \min_{\underline{q} \le q_{i} \le \overline{q}} w_{\beta}^T x_{i}q_{i}^{2} - w_{\alpha}^T x_{i} q_{i}, \enskip \forall i \in \mathcal{N} \label{eq:prod_bilevel_obj_dw}
\end{align} \label{eq:prod_bilevel}
\end{subequations}

For this particular application, the bilevel optimization problem rendered by the proposed approach has a significant drawback, because the global optimal solution of \eqref{eq:prod_bilevel} is $w_{\alpha}=w_{\beta}=0$. Consequently,  the lower-level problem \eqref{eq:prod_bilevel_obj_dw} can be replaced by the feasibility condition $\underline{q} \le \hat{q}_{i} \le \overline{q}$, and \textcolor{black}{the optimal values of $\hat{q}_{i}$ are determined as if uncertain parameters $\alpha'$ and $\beta'$ were known in advance, which violates non-anticipativity.} While this solution does lead to the minimum value of objective function \eqref{eq:prod_bilevel_obj_up}, it is useless to determine the optimal decisions for any context $X=x$. This degenerate solution of the proposed approach occurs because all coefficients of the objective function \eqref{eq:m_prod_surrogate} are uncertain. Interestingly, this shortcoming does not affect the newsvendor and product placement problems, because the uncertainty only affects the feasible region in those applications.

In this paper, we propose to ensure non-anticipativity  by formulating a bilevel optimization problem that considers the following modified surrogate model:
\begin{align}
    \min_{\underline{q} \le q \le \overline{q}} & \enskip q^{2} - \gamma q    \label{eq:m_prod_surrogate_gamma}
\end{align}

\noindent where $\gamma=\frac{\alpha'}{\beta'}$. For known values of $\alpha'$ and $\beta'$, the optimal solution of \eqref{eq:m_prod_surrogate} and \eqref{eq:m_prod_surrogate_gamma} coincide. However, surrogate model \eqref{eq:m_prod_surrogate_gamma} is simpler since it only includes one uncertain parameter instead of two. Assuming a linear relationship between the new uncertain parameter $\gamma$ and the contextual information, the proposed methodology yields the following bilevel problem:
\begin{subequations}
\begin{align}
    w_{\gamma}^{\text {BL}} \in \arg \min_{w_{\gamma}\in\mathbb{R}^p} & \enskip \sum_{i \in \mathcal{N}} \beta'_{i} \hat{q}^{2}_{i} - \alpha'_{i} \hat{q}_{i} \label{eq:prod_bilevel_gamma_up}\\
    \text{s.t.} & \enskip \hat{q}_{i} \in \arg \min_{\underline{q} \le q_{i} \le \overline{q}} q_{i}^{2} - w_{\gamma}^T x_{i} q_{i}, \enskip \forall i \in \mathcal{N} \label{eq:prod_bilevel_gamma_dw}
\end{align} \label{eq:prod_bilevel_gamma}
\end{subequations}

Formulation \eqref{eq:prod_bilevel_gamma} has the following advantages with respect to \eqref{eq:prod_bilevel}: i) it includes fewer parameters and therefore, it is less prone to overfitting, ii) it ensures non-anticipativity for any parameter vector $w_{\gamma}$, and iii) under certain conditions, it is able to retrieve the true model that relates random variable $\gamma$ and the context $X$ and the optimal solution to \eqref{eq:m_prod1_obj} as the sample size $|\mathcal{N}|$ grows to infinity, as shown in Proposition~\ref{prop:consistency} in~\ref{sec:appendix}. By replacing the lower-level problem with its KKT conditions, we obtain the following single-level problem:
\begin{subequations}
\begin{align}
    w_{\gamma}^{\text {BL}} \in \arg & \min_{w_{\gamma}\in\mathbb{R}^p}  \enskip \sum_{i \in \mathcal{N}} \beta'_{i} \hat{q}^{2}_{i} - \alpha'_{i} \hat{q}_{i} \\ 
    &\text{s.t.}  \enskip 2 \hat{q}_{i} - w_{\gamma}^T x_{i}  - \underline{\lambda}_i + \overline{\lambda}_i = 0, \enskip \forall i \in \mathcal{N}  \label{eq:prod_single_gamma_stationarity}\\
    &\phantom{s.t.}\enskip 0 \leq (\hat{q}_{i} - \underline{q}) \perp \underline{\lambda}_i \geq 0, \enskip \forall i \in \mathcal{N}  \label{eq:prod_single_gamma_com1}\\
    &\phantom{s.t.}\enskip 0 \leq (\overline{q} - \hat{q}_{i}) \perp \overline{\lambda}_i \geq 0, \enskip \forall i \in \mathcal{N}  \label{eq:prod_single_gamma_com2}
\end{align} \label{eq:prod_single_gamma}
\end{subequations}
\noindent where $\underline{\lambda}_i,\overline{\lambda}_i$ are the dual variables corresponding to the capacity limit constraints. Notice that if complementarity conditions \eqref{eq:prod_single_gamma_com1} and \eqref{eq:prod_single_gamma_com2} are reformulated using the Fortuny-Amat approach, problem  \eqref{eq:prod_single_gamma} can be solved to global optimality as a quadratic mixed-integer program using off-the-shelf optimization software. According to this procedure, optimal decisions under context $X=x$ are made by solving:

\begin{align}
    q^{\text {BL}}(x) \in \arg \min_{\underline{q} \le q \le \overline{q} } & \enskip  q^{2} -  (w_{\gamma}^{\text {BL}})^T x q \implies q^{\text {BL}}(x) \in \left\{\underline{q}, \frac{(w_{\gamma}^{\text {BL}})^T x}{2}, \overline{q}\right\} \label{eq:prod_bilevel_new_x}
\end{align}

Finally, we can directly learn the optimal production as a function of the known information as proposed in \cite{Ban2019}. Assuming the linear mapping $\hat{q}_{i} = w_q^T x_{i}$ with $w_q\in\mathbb{R}^p$, problem \eqref{eq:decision_rule_framework} for this particular application is formulated as:
\begin{subequations}
\begin{align}
    w_q^{\text {DR}} \in \arg \min_{w_q \in \mathbb{R}^p} & \sum_{i \in \mathcal{N}} \beta'_{i} (w_q^T x_{i})^2 - \alpha'_{i} w_q^T x_{i} \label{eq:prod_cynthia_obj} \\
    \text{s.t.} & \enskip \underline{q} \leq w_q^T x_{i} \leq \overline{q} \enskip \forall i \in \mathcal{N} \label{eq:prod_cynthia_capacity}
\end{align} \label{eq:prod_cynthia} 
\end{subequations}

Formulation \eqref{eq:prod_cynthia} is a convex quadratic optimization problem and can be then solved using commercial software such as CPLEX. In line with \eqref{eq:deci_rule}, the optimal quantity under context $X=x$ is directly computed as:

\begin{align}
    q^{\text {DR}}(x) = (w_q^{\text {DR}})^T x \label{eq:prod_decision_rule_new_x}
\end{align}

Although not true in general, approaches \eqref{eq:prod_single_gamma} and \eqref{eq:prod_cynthia} may lead to the same solution under specific conditions. For instance, if the produced quantity $q$ is not limited by minimum/maximum bounds, then constraint \eqref{eq:prod_single_gamma_stationarity} boils down to $\hat{q}_i = w_{\gamma}^T x_i / 2$. Consequently, the solutions of \eqref{eq:prod_single_gamma} and \eqref{eq:prod_cynthia} satisfy that $w_q^{\text {DR}}=w_{\gamma}^{\text {BL}}/2$ and therefore, $q^{\text {BL}}(x) = q^{\text {DR}}(x)$ for any context $X=x$. As we show in the following section, the decisions $q^{\text {BL}}$ delivered by our approach are significantly more profitable than $q^{\text {DR}}$ in the \emph{constrained} case.

\section{Numerical simulations} \label{sec:simulations}

As an additional contribution, we assess and compare the performance of the proposed approach for the strategic producer problem using numerical simulations. In Section \ref{subsec:examples} we illustrate the advantages of BL with respect to FO and DR using a small example with a reduced data sample. Additionally, Section \ref{sec:case_study} presents the numerical results of a realistic case study that uses real data from the Iberian Electricity Market and the Spanish Transmission System Operator \cite{Omie, Entsoe}.

\subsection{Illustrative example}
\label{subsec:examples}
This section aims at gaining insight into the performance of the proposed approach  with a small example of the strategic producer problem. For the sake of simplicity, we only consider four realizations of the uncertain parameters $\alpha'_{i},\beta'_{i}$ and a single feature $x_i\in[0,10]$, whose values are shown in Table~\ref{table:toy1_data}. Approach FO predicts the uncertain parameters using linear functions in the form $\hat{\alpha}_i=w_{\alpha,0}^{\rm FO} + w_{\alpha,1}^{\rm FO}x_i$ and $\hat{\beta}_i=w_{\beta,0}^{\rm FO} + w_{\beta,1}^{\rm FO}x_i$; approach BL assumes that $\hat{\gamma}_i = w_{\gamma,0}^{\rm BL} + w_{\gamma,1}^{\rm BL}x_i$; and approach DR considers $\hat{q}_i = w_{q,0}^{\rm DR} + w_{q,1}^{\rm DR}x_i$. These three approaches are compared with \textcolor{black}{a benchmark method (BN) that assumes perfect knowledge of the uncertain parameters $\alpha',\beta'$} and, consequently, yields the best possible offer for each time period. Obviously, this method cannot be implemented in practice and, accordingly, is just used here for comparison purposes. Given the reduced size of this example, methods BL-R and BL-M provide the same results and are thus jointly referred to as BL.

\begin{table}

\setlength{\tabcolsep}{12pt}
\centering
\begin{tabular}{ccccc}
\hline
$i$ & $x_{i}$ &  $\alpha'_{i}$ & $\beta'_{i}$ & $\gamma_{i}$ \\
\hline
1 &  2  &	 2  &  10   & 0.20 \\
2 &  4  &	17  &  10   & 1.70 \\
3 &  8  &	 8  &   3   & 2.67 \\
4 &  9  &   16  &   6   & 2.67 \\
\hline
\end{tabular}
\captionsetup{justification=centering}
\caption{Data sample $S$ for the illustrative example.}
\label{table:toy1_data}
\end{table}

First, we deal with the \emph{unconstrained case}, that is, the case in which the capacity constraints are disregarded. Table~\ref{table:toy1_incomes} shows the in-sample results obtained from methods BN, FO, DR, and BL, namely, the optimal production quantity for each time period $q_{i}$, the absolute income (I), and the relative income with respect to the benchmark (RI).  Notice that the income for each time period can be computed as $- \beta'_{i} q_{i}^{2} + \alpha'_{i} q_{i}$. As discussed in Subsection~\ref{sec:strategic}, in connection with the unconstrained case, coefficients $w^{\text {DR}}$ are equal to $w^{\text {BL}} / 2$ and the decisions and incomes obtained by DR and BL are the same as a result.  It is also interesting that the income of these two methods is 5\% higher than that of FO. To explain this, we refer to Fig.~\ref{fig:toy_example_un}, which depicts the optimal production quantities given by the different methods as a function of the context $x \in [0,10]$, namely, 
\begin{equation}
q^{\text {FO}}(x) = \frac{w_{\alpha,0}^{\rm FO} + w_{\alpha,1}^{\rm FO}x}{2(w_{\beta,0}^{\rm FO} + w_{\beta,1}^{\rm FO}x)} \quad q^{\text {BL}}(x) = \frac{w_{\gamma,0}^{\rm BL} + w_{\gamma,1}^{\rm BL}x}{2} \quad q^{\text {DR}}(x) = w_{q,0}^{\rm DR} + w_{q,1}^{\rm DR}x
\end{equation}

This figure shows that methods BL and DR can return decisions much closer to the benchmark ones than method FO for the four data points in the sample. Therefore, even for unconstrained optimization problems, the proposed methodology may outperform the classical ``first-predict-then-optimize'' approach, which is purely based on reducing the error of forecasting the uncertain parameters, simply because minimizing this error is not necessarily aligned with maximizing the decision value. 

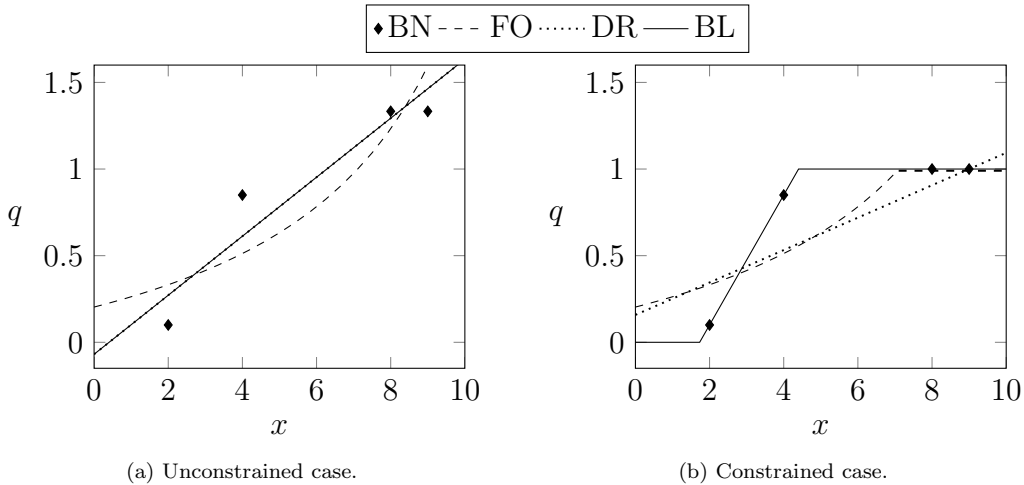
\begin{figure} 
\centering
\captionsetup[subfigure]{justification=centering}
    \begin{subfigure}[b]{0.475\textwidth}
        \centering
        \begin{tikzpicture}[]
        	\begin{axis}[
        	width=\textwidth,
            xmin = 0, xmax = 10, ymin=-0.15, ymax=1.6,
        	legend style={at={(1.25,1.04)},anchor=south,legend cell align=center,legend columns=4},	
        	clip marker paths=true,	
        	xlabel = $x$,
        	ylabel = $q$,
        	ylabel style={rotate=-90}]
        	\addplot[only marks, mark=diamond*] coordinates {(2,0.1) (4,0.85) (8,1.333) (9,1.333)}; \addlegendentry{BN}
        	\addplot[mark=none, dashed, domain=0:10] {0.5 * (5.000 + 1.000 * x ) / (12.29770992 + -0.877862595 * x)}; \addlegendentry{FO}
        	\addplot[mark=none, dotted, domain=0:10, line width=0.75pt] {-0.069024618 + 0.170302275 * x}; \addlegendentry{DR}
        	\addplot[mark=none, solid, domain=0:10] {0.5 * (-0.138049237 + 0.340604550 * x)}; \addlegendentry{BL}
        	\end{axis}	
        \end{tikzpicture} \\
        \caption{Unconstrained case.}
        \label{fig:toy_example_un}
    \end{subfigure}
    \hfill
    \begin{subfigure}[b]{0.475\textwidth}
        \centering
        \begin{tikzpicture}[]
        	\begin{axis}[	
        	width=\textwidth,
            xmin = 0, xmax = 10, ymin=-0.15, ymax=1.6,
        	clip marker paths=true,	
        	xlabel = $x$,
        	ylabel = $q$,
        	ylabel style={rotate=-90}]
        	\addplot[only marks, mark=diamond*] coordinates {(2,0.1) (4,0.85) (8,1) (9,1)}; 
        	\addplot[mark=none, dashed, domain=0:7.1] {0.5 * (5.000 + 1.000 * x ) / (12.29770992 + -0.877862595 * x)}; 
        	\addplot[mark=none, dotted, line width=0.75pt, domain=0:10] {0.158142665311473 + 0.0935397037399315 * x}; 
        	\addplot[mark=none, solid] coordinates {(0,0) (1.733,0) (4.4, 1) (10,1)}; 
        	\addplot[mark=none, dashed, line width=0.75pt] coordinates {(7.1,0.99) (10,0.99)};
        	\end{axis}	
        \end{tikzpicture} \\
        \caption{Constrained case.}
        \label{fig:toy_example_con}
    \end{subfigure}
\caption{Decision quantity $q$ versus feature $x$ for the illustrative example.}
\label{fig:toy_example}
\end{figure}

\begin{table}[]
    \setlength{\tabcolsep}{6pt}
    \centering
    \begin{tabular}{ccccccc}
    \hline
     & $q_1$ & $q_2$ & $q_3$ & $q_4$ & I(\euro) & RI(\%)\\
    \hline
    BN    & 0.10 & 0.85 & 1.33 & 1.33 & 23.33 & 100.0 \\
    FO    & 0.33 & 0.51 & 1.23 & 1.59 & 21.21 & 91.0  \\
    DR    & 0.27 & 0.61 & 1.29 & 1.46 & 22.36 & 95.9  \\
    BL    & 0.27 & 0.61 & 1.29 & 1.46 & 22.36 & 95.9  \\
    \hline 
    \end{tabular}
    \caption{Optimal offer and income for the unconstrained illustrative example (in-sample results).  Parameter vectors $w$ are: $w_{\alpha, 0}^{\text{FO}} = 5.000, w_{\alpha, 1}^{\text{FO}} = 1.000, w_{\beta, 0}^{\text{FO}} = 12.298, w_{\beta, 1}^{\text{FO}} = -0.878, w_{\gamma,0}^{\text{BL}} = -0.138$, $w_{\gamma,1}^{\text{BL}} = 0.341, w_{q,0}^{\text{DR}} = -0.069$, $w_{q,1}^{\text{DR}} = 0.170$.}
    \label{table:toy1_incomes}
\end{table}

Now we consider the \emph{constrained case}, that is, we bring the capacity constraints back into this small example. In particular, the minimum and maximum outputs of the strategic producer are set to 0 and 1, respectively. Similarly to Table~\ref{table:toy1_incomes}, the in-sample results obtained in the capacity-constrained case are collated in Table~\ref{table:toy2_incomes}, where we can see that the optimal quantity $q_{i}$ reaches its maximum value for some time periods and methods FO, DR and BL all provide different results. Methods FO and DR achieve an income 7.5\% and 8.2\% lower than the benchmark. This poor in-sample performance is better understood by means of Fig.~\ref{fig:toy_example_con}, which similarly to Fig.~\ref{fig:toy_example_un}, represents the optimal quantities as a function of the context for the constrained case according to \eqref{eq:prod_forecasting_new_x}, \eqref{eq:prod_bilevel_new_x} and \eqref{eq:prod_decision_rule_new_x}. First, since method FO is unaware of the feasibility region of the original conditional stochastic problem, it provides the same prediction of the uncertain parameters $\alpha,\beta$ in the unconstrained and constrained cases. However, using these forecasts in the surrogate model \eqref{eq:m_prod_surrogate} enforces $q=1$ for $x \ge 7.1$ in the constrained case. As observed, reducing the forecast error of $\alpha,\beta$ does not lead to the maximization of the decision value in the constrained case either. Second, method DR must ensure feasible solutions for all samples, a condition that also leads to quite poor approximations of the optimal quantities for most values of the context $x$. Furthermore, this approach would return infeasible solutions $q>1$ for $x>9$ as shown in Fig.~\ref{fig:toy_example_con}. On the contrary, the proposed approach BL can find a linear relation between $\gamma$ and $x$ to be used in the surrogate model \eqref{eq:m_prod_surrogate_gamma} that results in decisions $q$ that perfectly match those provided by the benchmark for the four data points and therefore, this method achieves the highest possible income in sample.

\begin{table}[]
    \setlength{\tabcolsep}{6pt}
    \centering
    \begin{tabular}{ccccccc}
    \hline
     & $q_1$ & $q_2$ & $q_3$ & $q_4$ & I(\euro) & RI(\%)\\
    \hline
    BN    & 0.10 & 0.85 & 1.00 & 1.00 & 22.33 & 100.0 \\
    FO    & 0.33 & 0.51 & 1.00 & 1.00 & 20.65 &  92.5 \\
    DR    & 0.35 & 0.53 & 0.91 & 1.00 & 20.50 &  91.8 \\
    BL  & 0.10 & 0.85 & 1.00 & 1.00 & 22.33 & 100.0 \\
    \hline 
    \end{tabular}
    \caption{Optimal offer and income for the constrained illustrative example (in-sample results). Parameter vectors $w$ are: $w_{\alpha, 0}^{\text{FO}} = 5.000, w_{\alpha, 1}^{\text{FO}} = 1.000, w_{\beta, 0}^{\text{FO}} = 12.298, w_{\beta, 1}^{\text{FO}} = -0.878, w_{\gamma,0}^{\text{BL}} = -1.300$, $w_{\gamma,1}^{\text{BL}} = 0.750, w_{q,0}^{\text{DR}} = 0.158$, $w_{q,1}^{\text{DR}} = 0.094$.}
    \label{table:toy2_incomes}
\end{table}

In summary, this small example  sheds light on the reasons why the proposed methodology outperforms existing ones for both unconstrained and constrained optimization problems under uncertainty: Our approach provides forecasts of the uncertain parameters that take into account the objective function and feasible region of the decision maker. Such enhanced forecasts translate into decisions that are much closer to those obtained in the ideal perfect information instance.

\subsection{Case study}
\label{sec:case_study}

In this section, we compare the proposed approach with existing ones using realistic data from the Iberian electricity market, as described in detail in  Section \ref{subsec:exp_setup}. Sections \ref{subsec:impact_technology}, \ref{subsec:impact_c2} and \ref{subsec:impact_flexibility} investigate how the type of generation portfolio, the quadratic cost term $c_2$, and the residual demand elasticity impact the performance of the proposed methodology, respectively. These three subsections only include the global optimal solutions given by method BL-M. Finally, Section~\ref{subsec:computational} provides computational solution times for all the approaches and discusses the differences between BL-R and BL-M in that respect.

\subsubsection{Experimental setup}
\label{subsec:exp_setup}


In order to test our proposal, we consider a realistic case study based on actual data from the Iberian electricity market. We construct a data set of the form $\{(x_i, \alpha_i, \beta_i), \forall i \in \mathcal{N}\}$ from which we derive the rest of the parameters required for our simulations as explained in Section \ref{sec:strategic}. We gather raw market data from the forward market OMIE \cite{Omie} to compute parameters $\alpha_i$, $\beta_i$ of the inverse demand function. Furthermore, we collect wind and solar power forecasts of the aggregated production of Spain to be used as a vector of contextual information $x_i$. The wind and solar forecasts, originally published by the Spanish TSO, are downloaded from the ENTSO-e Transparency Platform \cite{Entsoe}.

Historical raw hourly block-wise bids and offers submitted by buyers and sellers to the Iberian day-ahead energy market are processed to obtain parameters $\alpha_i$, $\beta_i$ as follows. For each hour of the year, we have access to the set of bids and offers defined as $\{(q_b, p_b), \forall b \in B\}$, $\{(q_o, p_o), \forall o \in O\}$, respectively, where $q_{b/o}$ is the amount of energy to buy/sell at price $p_{b/o}$. Thus, the residual demand $r$ to be potentially covered by a new producer entering the market for each possible price $p$ is defined as $r:=\sum_{b \in B: p_b \ge p } q_b - \sum_{o \in O: p_o \le p} q_o$, that is, the aggregated demand minus the aggregated production. The step-wise function relating the residual demand $r$ and the electricity price $p$ is plotted in Fig.~\ref{fig:market_curve} for illustrative purposes. 


Now consider that a new strategic producer enters the market with an offer to sell quantity $q$ at offer price $0$. If we assume that the remaining bids and offers stay unaltered, the market price would decrease following the right-hand part of the step-wise function depicted in Fig.~\ref{fig:market_curve}. Therefore, a strategic producer aiming at maximizing her profit is interested in modeling the dependence between her offered quantity $q$ and the market price $p$ in the shaded area, with parameter $\delta$ being a constant sufficiently larger than the producer's maximum generation capacity. In connection with Section \ref{sec:strategic}, we approximate said dependency using a linear function such that $p_i(q)=\alpha_i - \beta_i q$ as illustrated in Fig.~\ref{fig:market_curve} and therefore, the values of $\alpha_i,\beta_i$ for each hour are obtained by determining the linear function that best approximates the blocks shaded in gray.


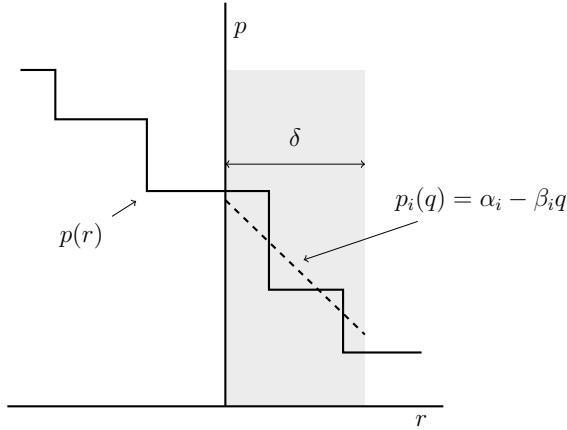
\begin{figure} 
\def\dh{2.7}
\def\dw{3.2}
\centering
    \begin{tikzpicture}[scale=0.8]
    	\begin{axis}[
    	    scale=1,
        	hide axis,
        	width=11cm,
            xmin = -5, xmax = 8, ymin=-0.3, ymax=5,
        	clip marker paths=true]
            \addplot+[mark=none, draw=none, fill=lightgray!30] coordinates {(0, 3.75) (\dw, 3.75)} \closedcycle;
            
        	\addplot[mark=none, solid, line width=1pt] coordinates {(-5, 0) (5,0)};
        	\addplot[mark=none, solid, line width=1pt] coordinates {(0, 0) (0,4.5)};
        	\node[anchor=west] (pp) at (axis cs:0,4.2){$p$};
            \node[anchor=north] (qq) at (axis cs:4.5,0){$r$};
            
        	\addplot[mark=none, solid, domain=-5:5, line width=1pt] coordinates {
        	(-4.7,3.75) (-3.9,3.75) (-3.9,3.2) (-1.8,3.2) (-1.8,2.4) (1,2.4)
        	(1, 1.3) (2.7, 1.3) (2.7, 0.6) (4.5, 0.6)};
        	
            \addplot[mark=none, dashed, line width=1pt] coordinates {(0, 2.3) (\dw,0.8)};
            
            \addplot[mark=none, solid, <->] coordinates {(0, \dh) (\dw, \dh)};

            \node[anchor=south] (delta) at (axis cs:\dw/2,\dh+0.1){$\delta$};
            
            \node[anchor=west] (aa) at (axis cs:3.7,2.3){$p_i(q) = \alpha_i - \beta_i q$};
            \node (bb) at (axis cs:1.6,1.6){};
            \draw[->](aa)--(bb);
            
            \node[anchor=west] (cc) at (axis cs:-4.0,1.9){$p(r)$};
            \node (dd) at (axis cs:-1.85,2.35){};
            \draw[->](cc)--(dd);
            
    	\end{axis}	
    \end{tikzpicture} \\
\caption{Inverse residual demand curve $p(r)$ (solid) and fitted inverse demand function $p_i(q)$ (dashed) in the interval $[0, \delta]$ . The intercept and slope of the fitted line are $\alpha_i$ and $-\beta_i$, respectively.}
\label{fig:market_curve}
\end{figure}



We collect data from November 2018 to October 2019 in order to build a data set of 8600 hours (almost one year), which is divided into 43 bins of 200 consecutive samples. \textcolor{black}{Each bin is randomly split into training and test sets with a ratio of $80\% / 20 \%$, respectively. This process is repeated five times for each bin. Therefore, each approach is solved for 215 different training sets of 160 samples, and the obtained solutions are evaluated using the corresponding 215 test sets of 40 samples each. The out-of-sample results provided in Sections \ref{subsec:impact_technology}, \ref{subsec:impact_c2}, \ref{subsec:impact_flexibility}, and \ref{subsec:computational} are obtained by averaging the outcomes over these 215 test sets.} We select a value of $\delta$ equal to 5 GW in order to encompass enough bids and offers to obtain accurate approximations of $p_i(q)$ throughout the whole data set. We determine the optimal parameters $w$ through problems \eqref{eq:prod_forecasting}, \eqref{eq:prod_single_gamma}, and \eqref{eq:prod_cynthia}, which we denote FO, BL and DR, respectively. More specifically, we name BL-M the Fortuny-Amat big-M reformulation of model \eqref{eq:prod_single_gamma} and BL-R the regularized counterpart, which are a particularization of \eqref{eq:kkt_reformulation} and \eqref{eq:kkt_reformulation_reg}, respectively.

Each bin is executed in parallel with the following resources: 1 CPUs Intel E5-2670 @ 2.6 GHz and 1 Gb of RAM. 
Each instance of model BL-M is solved using the MIQP solver CPLEX \cite{CPLEX} for a maximum time of 20 minutes or a relative gap of $10^{-8}$. On the other hand, BL-R is executed using the NLP solver CONOPT \cite{CONOPT} without time limit. 

\subsubsection{Impact of the generation portfolio}\label{subsec:impact_technology}

As previously stated, the main advantage of our approach is that it yields forecast values for the uncertain parameters that are tailored to the optimization problem by which the strategic power producer determines her optimal market sale. However, such an advantage may translate into higher or lower incomes  depending on the firm's generation portfolio. In this section, therefore, we evaluate the performance of the various approaches for three generic power plants characterized by different linear costs ($c_1$) and capacities ($\overline{q}$).

Table \ref{tab:units} provides the values of $c_1$, $c_2$ and $\overline{q}$ for these three  generic units. For simplicity, the minimum output $\underline{q}$ of all units is assumed equal to 0 and the value of $c_2$ is set to 0.005 \euro/MWh$^2$ \citep{Djurovic2012}. The base unit can represent a nuclear power station and is characterized by low fuel cost and high capacity. The medium unit can be, for example, a carbon-based power station with a lower capacity and higher fuel costs. Finally, peak units, such as combined cycle power plants, typically have the highest fuel cost and a smaller generation capacity. Table \ref{tab:units} also includes the percentage of time periods in which $q^{\text{BN}}=0$, $0 < q^{\text{BN}} < \overline{q}$ and $q^{\text{BN}}=\overline{q}$ denoted as $\mathcal{N}_{q^{\text{BN}}=0}$, $\mathcal{N}_{0 < q^{\text{BN}} < \overline{q}}$, and $\mathcal{N}_{q^{\text{BN}}=\overline{q}}$, respectively, where $q^{\text{BN}}$ represents the optimal quantity that the strategic firm would place into the market under the \emph{true} inverse demand function (that is, the solution given by the benchmark approach). It is observed that the base unit generates at maximum capacity for most times periods and  is only shut down in 8\% of the cases. The medium generating unit is idle 32\% of the time (if prices are too low) and is at maximum capacity during the 39\% of the time periods. Finally, the peak unit is not dispatched most of the time since electricity prices are usually below its marginal production cost.

\begin{table}[]
    \centering
    \begin{tabular}{lcccccc}
    \hline
    & $c_1$ & $c_2$ & $\overline{q}$ & $\mathcal{N}_{q^{\text{BN}}=0}$ & $\mathcal{N}_{0 < q^{\text{BN}} < \overline{q}}$ & $\mathcal{N}_{q^{\text{BN}} = \overline{q}}$ \\
    & (\euro/MWh) & (\euro/MWh$^2$) & (MW) & (\%) & (\%) & (\%) \\
    \hline
    Base   & 10 & 0.005 & 1000 &  8  &  16  & 76  \\ 
    Medium & 35 & 0.005 & 500  & 32  &  29  & 39  \\
    Peak   & 50 & 0.005 & 250  & 79  &  12  &  9  \\
    \hline
    \end{tabular}
    \caption{Generation technology data.}
    \label{tab:units}
\end{table}

\textcolor{black}{Table \ref{tab:results_base_medium_peak} provides the out-of-sample results computed by averaging over the 215 test sets of 40 samples each described in Section \ref{subsec:exp_setup}.} These results include the absolute income for the benchmark approach (I$^{\text {BN}}$) for the considered time horizon, the relative income (RI) for methods FO, DR and BL-M, and the percentage of time periods for which method DR provides infeasible solutions (INFES$^{\text {DR}}$). A first obvious observation is that, as expected, the absolute income is higher for base units and lower for peak units. A second, probably more interesting remark relates to the impact of the uncertainty about the inverse demand function on the market revenues accrued by each generating technology. Since the base unit is at full capacity most of the time, the uncertainty pertaining to the residual demand does not affect revenues that much, and the three methods obtain relative incomes above 94\%. On the contrary, the participation of the medium and peak units highly depends on market conditions and therefore, this very same uncertainty  remarkably deteriorates market revenues, with the eventual result that the maximum relative incomes amount to 80\% and 59\%, respectively, for the method featuring the best performance (which is BL-M). 

\begin{table}[]
    \centering
\begin{tabular}{lccccc}
    \hline
       & I$^{\text {BN}}$  & RI$^{\text {FO}}$ &RI$^{\text {DR}}$ & RI$^{\text {BL-M}}$ & INFES$^{\text {DR}}$ \\
       & (M\euro) & (\%) & (\%) & (\%) & (\%) \\
    \hline
    Base   & 176.7 & 96.1 & 94.6 & 96.3 & 4.9 \\
    Medium  & 20.9 & 77.4 & 62.5 & 80.0 & 1.7 \\
    Peak    &  1.2 & 44.1 & 18.9 & 58.7 & 0.1 \\
    \hline
\end{tabular}
    \caption{Impact of generation technology (Out-of-sample} results).
    \label{tab:results_base_medium_peak}
\end{table}


On a different front, the DR approach produces infeasible offers in a considerable number of time periods, whereas FO and BL-M are guaranteed to provide feasible production quantities in all cases. The percentage of periods for which method DR results in an infeasible $q$ is higher for the base unit because the medium and peak units are idle more frequently. For this particular application, making DR decisions feasible can be easily achieved by computing $\min(\max(\hat{q}_{i},\underline{q}),\overline{q})$. However, this post-processing step to guarantee feasibility can be much more challenging in applications with general convex feasible sets. It is also apparent that the DR approach provides the lowest RI for the three cases considered and therefore, this method is not even recommended for decision-making models where the decision vector is simply bounded component-wise.

Finally, we notice that, for the three generation technologies, the proposed method BL-M always provides higher incomes than the FO approach. However, relative income  improvements vary widely for each case. For the base unit, the relative income of BL-M is only 0.2\% higher than that of FO. This is understandable since this power plant is at full capacity most of the time and thus, the impact of the uncertainty is comparatively minor, as we mentioned before. For the peak unit, in contrast, the relative income of BL-M is 14.6\% higher than that of FO. Note that, unlike for base units, making small errors in the forecasts of the market conditions can be catastrophic for peak units, because such deviations may mean the difference between producing nothing or producing at maximum capacity. The ability of BL-M to reduce the forecast error when consequences are worse, together with the lower absolute incomes of peak units, explains this high difference in percentage. The gain of BL-M with respect to FO for the medium unit has an intermediate value of 2.6\%.

To conclude this section, Table \ref{tab:incomes_losses} includes, for the peak generating unit, the percentage of periods with a positive income, with a negative income and with an income equal to zero, denoted as $\mathcal{N}_{\text{I}>0}$, $\mathcal{N}_{\text{I}<0}$ and $\mathcal{N}_{\text{I}=0}$, in that order. The total sum of positive and negative incomes is also provided in the last two rows, represented by the symbols I$^+$ and I$^-$, respectively. Interestingly, BL-M achieves the highest percentage of periods with a positive income and succeeds in providing the highest value of I$^+$.

\begin{table}[]
    \centering
    \begin{tabular}{ccccc}
    \hline
    & BN & FO & DR & BL-M \\
    \hline
    $\mathcal{N}_{\text{I}>0}(\%)$   & 20.6 &  9.0 &  6.6 & 10.1 \\
    $\mathcal{N}_{\text{I}<0}(\%)$   &  0.0 &  3.7 &  3.0 &  3.4 \\
    $\mathcal{N}_{\text{I}=0}(\%)$   & 79.4 & 87.3 & 90.4 & 86.4 \\
    I$^+$(M\euro)        &  1.23 &  0.73 &  0.37 &  0.87 \\
    I$^-$(M\euro)        &  0.00 & -0.19 & -0.14 & -0.15 \\
    \hline
    \end{tabular}
    \caption{Income distribution for the peak generating unit (Out-of-sample} results).
    \label{tab:incomes_losses}
\end{table}

\subsubsection{Impact of parameter \texorpdfstring{$c_2$}{c2}} \label{subsec:impact_c2}

While parameter $c_1$ basically depends on the cost of the fuel used by each unit, the interpretation of $c_2$ is not as straightforward. Indeed, this parameter measures the decrease in the plant marginal cost as production increases and is connected to technological aspects of the plant's economy of scale, like the way the plant efficiency varies for different operating points. For this reason, in this section, we investigate the impact of $c_2$ on the performance of the proposed method. Notice that, if $\underline{q}=0$ MW, then the unit marginal costs range from $c_1$ to $c_1+c_2\overline{q}$. In a similar way, as Table \ref{tab:units} does, Table \ref{tab:below_above_c2} shows the operating regime of a medium generating unit with $c_1=35$\euro/MWh, $\overline{q}=500$ MW and different values of $c_2$. As expected, a decrease in $c_2$ entails a reduction in the marginal production cost of the plant and, as a result, the amount of electricity the strategic firm places into the market increases.

\begin{table}[]
    \centering
\begin{tabular}{cccc}
    \hline
    $c_2$   & $\mathcal{N}_{q^{\text{BN}}=0}$ & $\mathcal{N}_{0 < q^{\text{BN}} < \overline{q}}$ & $\mathcal{N}_{q^{\text{BN}} = \overline{q}}$ \\
    (\euro/MWh$^2$) & (\%) & (\%) & (\%) \\
    \hline
    0.01   & 32 & 43 & 25 \\
    0.005  & 32 & 29 & 39 \\
    0.001  & 32 & 15 & 53 \\
    \hline
\end{tabular}
    \caption{Operating regime of a medium generating unit ($c_1=35$\euro/MWh, $\overline{q}=500$MW). }
    \label{tab:below_above_c2}
\end{table}

Table \ref{tab:results_medium} provides the same results as Table \ref{tab:results_base_medium_peak}, but for different values of $c_2$ and the medium generating unit only. Naturally, reducing the plant marginal costs increases both the absolute income for the benchmark approach and also the relative income achieved by all methods. 
Nevertheless, BL-M proves to be between 1.8\% and 2.6\% more profitable to the producer than the traditional FO approach for the values of $c_2$ considered.

\begin{table}[]
    \centering
\begin{tabular}{cccccc}
    \hline
    $c_2$   & I$^{\text {BN}}$ & RI$^{\text {FO}}$ & RI$^{\text {DR}}$ & RI$^{\text {BL-M}}$ & INFES$^{\text {DR}}$ \\
    (\euro/MWh$^2$) & (M\euro)  & (\%) & (\%) & (\%) & (\%)  \\
    \hline
    0.01   & 16.3  & 73.9 & 60.0 & 76.4 & 1.1 \\
    0.005  & 20.9  & 77.4 & 62.5 & 80.0 & 1.7 \\
    0.001  & 25.5  & 81.2 & 65.6 & 83.0 & 1.0 \\
    \hline
\end{tabular}
    \caption{Impact of parameter $c_2$ on a medium generating unit (Out-of-sample} results).
    \label{tab:results_medium}
\end{table}


\subsubsection{Impact of the residual demand elasticity} \label{subsec:impact_flexibility}

So far we have centered our study on the cost structure of the generation portfolio owned by the strategic firm. Here, on the contrary, we focus on the elasticity of the market residual demand. Roughly speaking, this elasticity is inversely proportional to parameter $\beta$ of the inverse demand function. Bearing this in mind, we compare the next two market situations, namely, the ``Normal'' and the ``Low-elast'' instances. The former corresponds to the values of $\beta$ in the original data set, while the latter is obtained by multiplying these $\beta$-values by two. 

Table~\ref{tab:2xbeta} shows the incomes provided by each of the considered methods relative to those of the benchmark. The numbers correspond to the medium power plant of Table~\ref{tab:units}. The overall effect of increasing the residual demand elasticity (lower $\beta$-values) is analogous to that of decreasing parameter $c_2$, i.e., the involvement of the strategic producer in the market augments, thus leading to higher revenues. Results in Table~\ref{tab:2xbeta} show that the proposed BL approach outperforms FO and DR for different values of the residual demand elasticity, improving the competitive edge of the strategic producer in more than 2\% with respect to FO in terms of relative income.


\begin{table}[]
    \centering
\begin{tabular}{lccccc}
    \hline
       & I$^{\text{BN}}$ & RI$^{\text{FO}}$ & RI$^{\text{DR}}$ & RI$^{\text{BL-M}}$ & INFES$^{\text{DR}}$ \\
       & (M\euro) & (\%)  & (\%) & (\%) & (\%)  \\
    \hline
    Normal    & 20.9  & 77.4 & 62.5 & 80.0 & 1.7 \\
    Low-elast & 18.6  & 74.7 & 60.0 & 77.1 & 1.7 \\
    \hline
\end{tabular}
    \caption{Impact of residual demand elasticity  (Out-of-sample} results).
    \label{tab:2xbeta}
\end{table}


\subsubsection{Computational results}
\label{subsec:computational}

In Sections \ref{subsec:impact_technology}, \ref{subsec:impact_c2} and \ref{subsec:impact_flexibility} we have only included results from BL-M, and not from BL-R, because the former variant of the bilevel framework we propose guarantees global optimality for the strategic producer problem for appropiate values of large constants $M^P, M^D$. However, solving model BL-M can be computationally very expensive. Alternatively, local optimal solutions of the proposed bilevel model \eqref{eq:prod_single_gamma} can be efficiently found by way of the particularization of the regularization approach \eqref{eq:kkt_reformulation_reg} that we named BL-R. 

Next, we first compare the solutions given by methods BL-M and BL-R. In order to solve model BL-R, we iteratively shrink the regularization parameter $\epsilon$ taking values from the discrete set $\{10^6, 10^4, 10^2, 1, 10^{-1}, 10^{-2}, 0\}$. In each iteration, we initialize the model with the solution provided by the previous problem. It is also worth mentioning that method BL-M is warm-started with the solution delivered by BL-R. 

Results in Table \ref{tab:results_BLM_BLR} are intended to compare the relative incomes of BL-M and BL-R for each generating unit whose data is collated in Table \ref{tab:units}. Although method BL-R logically yields lower incomes, the differences with respect to BL-M are below 0.8\%. This means that if model \eqref{eq:prod_bilevel} does not satisfy the conditions to be reformulated as a MIQP or the computational resources are limited, then a good solution (i.e., a solution with a small loss of optimality) can be efficiently computed by solving the regularized NLP version of our approach.  

\begin{table}[]
    \centering
\begin{tabular}{lcc}
    \hline
       & RI$^{\text {BL-M}}$ & RI$^{\text {BL-R}}$ \\
       & (\%) & (\%) \\
    \hline
    Base    & 96.3 & 96.3 \\
    Medium  & 80.0 & 79.2 \\
    Peak    & 58.7 & 58.4 \\
    \hline
\end{tabular}
    \caption{Comparison of BL-M and BL-R (Out-of-sample} results).
    \label{tab:results_BLM_BLR}
\end{table}

Finally, we compare the computational burden of methods FO, DR, BL-M, and BL-R. The average simulation time invested in solving problems \eqref{eq:prod_forecasting}, \eqref{eq:prod_single_gamma} and \eqref{eq:prod_cynthia} for the three generation technologies are indicated in Table \ref{table:times}, where the maximum solution time has been limited to 20 minutes for all methods. These results highlight the higher computational burden required by BL-M to ensure global optimality. On the other hand, the computing times of BL-R are very affordable, especially considering the competitive edge that this method gives to the strategic firm.

\begin{table}
\centering
\begin{tabular}{lcccc}
\hline
 & FO  &  DR  &  BL-R  &  BL-M \\
 & (s)  &  (s)  &  (s)  &  (s) \\
\hline
Base   & 0.24 & 0.65 & 3.90 & 197.77 \\
Medium & 0.35 & 1.06 & 6.80 & 149.89 \\
Peak   & 0.26 & 0.78 & 4.62 &  22.68 \\
\hline
\end{tabular}
\caption{Average computing time.}
\label{table:times}
\end{table}

\section{Conclusions}
\label{sec:conclusions}
In this paper, we have addressed the problem of data-driven decision-making under uncertainty in the presence of contextual information. More precisely, our ultimate purpose has been to construct a parametric model to predict, based on some covariate information, the uncertain parameters that are input to the optimization model by which the decision is made. To this end, we have proposed a bilevel framework whereby such a parametric model is estimated taking into account the impact of its outputs on the feasibility and value of the decision. Under convexity assumptions, we have provided two single-level reformulations of the bilevel program, namely, a non-linear regularized optimization problem and a mixed-integer non-linear reformulation based on the use of large enough constants. When compared to alternative approaches available in the technical literature, ours features two major advantages: it guarantees feasibility in constrained decision-making problems, and its solution can be directly tackled using off-the-shelf optimization solvers under convexity assumptions.


We have theoretically compared our approach with existing ones for three different applications, namely, the newsvendor problem, the product placement problem, and the strategic producer problem. Additionally, we have  evaluated the performance of our approach and its practical relevance through a realistic case study of a strategic producer that participates in the Iberian electricity market. Specifically, numerical results show that our framework not only significantly increases the revenue streams of the firm in general, but also proves to be critical to generation portfolios mainly consisting of peak power units. Indeed, the market revenues of a strategic peak generation portfolio are specially sensitive to the uncertainty in the inverse demand function. Therefore, in this case, the strategic firm may put at risk the bulk of its market incomes, by being left out of the market or trading in deficit. Our approach, however, is, by construction, aware of that sensitivity and thus, is able to retain most of the profit the firm would make under a perfectly predictable inverse demand function.

Potential extensions of this work would include the use of more advanced techniques in the resolution of our bilevel framework such as those employed in more general MPCC problems. Likewise, the generalization of our approach to multi-stage decision-making problems under uncertainty requires further analysis.

\section*{Acknowledgments}

This work was supported in part by the European Research Council (ERC) under the EU Horizon 2020 research and innovation program (grant agreement No. 755705), {\color{black} in part by the Spanish Ministry of Science and Innovation (AEI/10.13039/501100011033) through project PID2020-115460GB-I00, and in part by the Junta de Andalucía (JA), the Universidad de Málaga and the European Regional Development Fund (FEDER) through the research projects P20\_00153 and UMA2018‐FEDERJA‐001.} {\color{black} M. Á. Mu\~{n}oz is also funded by the Spanish Ministry of Science, Innovation and Universities through the State Training Subprogram 2018 of the State Program for the Promotion of Talent and its Employability in R\&D\&I, within the framework of the State Plan for Scientific and Technical Research and Innovation 2017-2020 and by the European Social Fund}. Finally, the authors thankfully acknowledge the computer resources, technical expertise, and assistance provided by the SCBI (Supercomputing and Bioinformatics) center of the University of M\'alaga.

\appendix

\section{Asymptotic consistency}\label{sec:appendix}

{\color{black}

 \begin{prop} \label{prop:consistency} Let $S = \left\lbrace (\alpha'_{i}, \beta'_{i}, x_{i}), \forall i \in \mathcal{N}\right \rbrace$ be an i.i.d sample of size $N$ and suppose that there exists a linear relationship between $\alpha'$ and $\beta' >0$ given by $\frac{\alpha'}{\beta'} = a^{T}x + \xi$, with $\xi$ being a zero-mean noise independent of $x$, $\alpha'$ and $\beta'$, and that the expectations $\mathbb{E}[\alpha']$, $\mathbb{E}[\beta']$ and $\mathbb{E}[\alpha'x]$ are all finite. Then, it almost surely holds in the limit $N \rightarrow \infty$ that  the optimizer of the problem
 \begin{subequations}
\begin{align}
    \min_{w_{\gamma}\in\mathcal{W};\, \hat{q}_{i}} & \enskip \frac{1}{N} \sum_{i \in \mathcal{N}} \beta'_{i} \hat{q}^{2}_{i} - \alpha'_{i} \hat{q}_{i} \label{eq:prod_bilevel_gamma_proof_outer}\\
    \text{s.t.} & \enskip \hat{q}_{i} \in \arg \min_{\underline{q} \le q_{i} \le \overline{q}} q_{i}^{2} - w_{\gamma}^T x_{i} q_{i}, \enskip \forall i \in \mathcal{N}\label{eq:prod_bilevel_gamma_proof_inner}
\end{align} \label{eq:prod_bilevel_gamma_proof}
\end{subequations}
with $\mathcal{W} \subset \mathbb{R}^p$ being a compact set containing $a$, is attained at $w_{\gamma} = a$.
\end{prop}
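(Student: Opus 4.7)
The plan is to first eliminate the bilevel structure by solving the lower-level problem in closed form. For each $w_\gamma$ and $x_i$, the strictly convex quadratic $q_i \mapsto q_i^2 - w_\gamma^T x_i q_i$ attains its unique constrained minimum at
\begin{equation*}
\hat{q}_i(w_\gamma) = \Pi_{[\underline{q},\overline{q}]}\!\bigl(\tfrac{1}{2} w_\gamma^T x_i\bigr),
\end{equation*}
where $\Pi_{[\underline{q},\overline{q}]}$ denotes projection onto the interval. Plugging this (Lipschitz continuous) map into the outer objective recasts \eqref{eq:prod_bilevel_gamma_proof} as the single-level program $\min_{w_\gamma \in \mathcal{W}} \Phi_N(w_\gamma)$ with
\begin{equation*}
\Phi_N(w) := \frac{1}{N}\sum_{i \in \mathcal{N}} F(w; x_i, \alpha'_i, \beta'_i), \quad F(w; x, \alpha', \beta') := \beta'\bigl[\Pi_{[\underline{q},\overline{q}]}(\tfrac{1}{2} w^T x)\bigr]^2 - \alpha' \Pi_{[\underline{q},\overline{q}]}(\tfrac{1}{2} w^T x),
\end{equation*}
where $F(\cdot; x, \alpha', \beta')$ is continuous in its first argument.

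Second, I would invoke a uniform strong law of large numbers on the compact set $\mathcal{W}$. Writing $C := \max(|\underline{q}|, |\overline{q}|)$, the bound $|F(w; x, \alpha', \beta')| \le C^2 \beta' + C |\alpha'|$ holds uniformly in $w \in \mathcal{W}$; the right-hand side is integrable by the finiteness hypotheses on $\mathbb{E}[\beta']$ and $\mathbb{E}[\alpha']$ (the latter being controlled through $\mathbb{E}[\alpha' x]$). Combined with the continuity of $F$ in $w$, this yields
\begin{equation*}
\sup_{w \in \mathcal{W}} \bigl|\Phi_N(w) - \Phi(w)\bigr| \xrightarrow{\text{a.s.}} 0, \qquad \Phi(w) := \mathbb{E}\bigl[F(w; X, \alpha', \beta')\bigr].
\end{equation*}

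Third, I would show that $w_\gamma = a$ minimizes $\Phi$. From the data-generating equation $\alpha' = \beta'(a^T X + \xi)$, the independence of $\xi$ from $(X, \alpha', \beta')$, and $\mathbb{E}[\xi] = 0$, I would derive the pivotal conditional identity
\begin{equation*}
\mathbb{E}[\alpha' \mid X] = a^T X \, \mathbb{E}[\beta' \mid X].
\end{equation*}
Applying the tower property then rewrites
\begin{equation*}
\Phi(w) = \mathbb{E}\Bigl[\mathbb{E}[\beta' \mid X]\,\bigl(\Pi_{[\underline{q},\overline{q}]}(\tfrac{1}{2} w^T X)^2 - a^T X \, \Pi_{[\underline{q},\overline{q}]}(\tfrac{1}{2} w^T X)\bigr)\Bigr].
\end{equation*}
Because $\beta' > 0$, the weight $\mathbb{E}[\beta' \mid X]$ is almost surely positive, and for each realization of $X$ the scalar quadratic $q \mapsto q^2 - a^T X q$ attains its minimum over $[\underline{q},\overline{q}]$ exactly at $\Pi_{[\underline{q},\overline{q}]}(\tfrac{1}{2} a^T X)$. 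Hence the choice $w = a$ renders the integrand pointwise minimal, so $a \in \arg\min_{w \in \mathcal{W}} \Phi(w)$.

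Finally, a standard argmin-consistency result for M-estimators, combined with the uniform almost-sure convergence $\Phi_N \to \Phi$ on the compact set $\mathcal{W}$, implies that every cluster point of the sequence of empirical minimizers almost surely belongs to $\arg\min_{\mathcal{W}} \Phi$; in particular, $a$ is attained in the limit as claimed. The principal obstacle, in my view, is not any individual step but the careful stitching of these steps: one must verify that the empirical process satisfies a uniform strong law via the continuity and integrable envelope above, and acknowledge that the box constraints may flatten $w \mapsto \Pi_{[\underline{q},\overline{q}]}(\tfrac{1}{2} w^T x)$ on parts of $\mathcal{W}$ so that uniqueness of the population minimizer is not automatic; the proposition, however, only claims attainment at $a$, which the pointwise optimality argument delivers without requiring uniqueness.
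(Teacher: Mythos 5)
Your proposal is correct and follows essentially the same route as the paper: both reduce the lower level to the closed-form projection $\max(\underline{q},\min(w_\gamma^Tx/2,\overline{q}))$, establish that $w_\gamma=a$ solves the population problem via the identity $\mathbb{E}[\alpha'|x]=a^Tx\,\mathbb{E}[\beta'|x]$ and a pointwise-minimization argument, and then transfer to the empirical problem by a uniform law of large numbers with an integrable envelope plus standard SAA/M-estimator consistency (the paper cites Theorems 5.3 and 7.48 of Shapiro et al.\ for exactly these two steps). Your caveat about non-uniqueness of the population minimizer matches the paper's, which likewise only secures uniqueness under the extra condition that the unconstrained optimum lies strictly inside $[\underline{q},\overline{q}]$ on a set of positive measure.
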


\begin{proof}

First, notice that $\frac{\alpha'}{\beta'} = a^{T}x + \xi$ implies that $\frac{\mathbb{E}[\alpha'|x]}{\mathbb{E}[\beta'|x]} = a^Tx$, since $\alpha' = \beta'\, a^Tx+\beta' \xi$, and thus, $\mathbb{E}[\alpha'|x] =  a^Tx\, \mathbb{E}[\beta'|x]$ given the independent nature of the noise $\xi$.

The \emph{true} expectation problem associated with the sample average approximation \eqref{eq:prod_bilevel_gamma_proof} is given by:
\begin{subequations}
\begin{align}
\min_{w_{\gamma}\in\mathcal{W};\, \hat{q}(x)} & \enskip \int_{\mathcal{X} \times \mathbb{R}^+ \times \mathbb{R} } \left(\beta' \hat{q}^{2}(x) - \alpha' \hat{q}(x)\right) Q(dx, d\beta',d\alpha')\label{eq:prod_bilevel_gamma_proof_true_ul} \\
    \text{s.t.} & \enskip \hat{q}(x)\in \arg \min_{\underline{q} \leq q \leq \overline{q}} q^{2} - w_{\gamma}^T x q, \enskip \forall x \in \mathcal{X}\label{eq:prod_bilevel_gamma_proof_true_ll}
\end{align} \label{eq:prod_bilevel_gamma_proof_true}
\end{subequations}
where $Q$ is the joint probability law governing the random parameters $\beta'$ and $\alpha'$ and the feature vector $X$.

We first show that $a$ is the unique solution to problem~\eqref{eq:prod_bilevel_gamma_proof_true}. To this end, we note that the lower-level problem \eqref{eq:prod_bilevel_gamma_proof_true_ll} renders the following decision mapping for almost all $x \in \mathcal{X}$:
$$\hat{q}(x)= \max\left(\underline{q}, \min\left(\frac{ w_{\gamma}^T x}{2},\overline{q}\right)\right)$$
which is a continuous function in $w_{\gamma}$.

Now let $Q_X$ be the probability measure of $X$. Consider the following optimization problem, which is a relaxation of \eqref{eq:prod_bilevel_gamma_proof_true}.
\begin{align*}
&\min_{q(x) \in [\underline{q},\overline{q}], \, \forall x \in \mathcal{X}} \enskip \int_{\mathcal{X} \times \mathbb{R}^+ \times \mathbb{R} } \left(\beta' q^{2}(x) - \alpha' q(x)\right) Q(dx, d\beta',d\alpha') =\\
&\min_{q(x) \in [\underline{q},\overline{q}], \, \forall x \in \mathcal{X}} \enskip \int_{\mathcal{X}}   \left(q^{2}(x) \mathbb{E}[\beta'|x]- q(x)\mathbb{E}[\alpha'|x]\right)  Q_X(dx) =\\
&\int_{\mathcal{X}}   \left(\min_{q(x) \in [\underline{q},\overline{q}]} \enskip q^{2}(x) \mathbb{E}[\beta'|x]- q(x)\mathbb{E}[\alpha'|x]\right) Q_X(dx)
\end{align*}
The inner pointwise minimum results in the following optimal decision rule:
$$q(x)= \max\left(\underline{q}, \min\left(\frac{\mathbb{E}[\alpha'|x]}{2\mathbb{E}[\beta'|x]},\overline{q}\right)\right) 
= \max\left(\underline{q}, \min\left(\frac{ a^T x}{2},\overline{q}\right)\right)
$$
for almost all $x \in \mathcal{X}$.

Therefore, since $ w_{\gamma} = a$ is feasible in the true expectation problem \eqref{eq:prod_bilevel_gamma_proof_true}, then it is also an optimal solution to this problem. Furthermore, this solution is unique, if there exists a subset of $\mathcal{X}$ with measure greater than zero such that $\underline{q} < \frac{\mathbb{E}[\alpha'|x]}{2\mathbb{E}[\beta'|x]} < \overline{q}$.

In addition, note that all the samples in $S$ are i.i.d. and that $\beta' q^{2}(x) - \alpha' q(x)$ is dominated by the function $\max \left(\beta' \overline{q}^{2} - \alpha' \overline{q}, \beta' \underline{q}^{2} - \alpha' \underline{q}, \frac{\alpha'^2}{4\beta'}\right)$, which is integrable because the expectations $\mathbb{E}[\alpha']$, $\mathbb{E}[\beta']$ and $\mathbb{E}[\alpha'x]$ are all finite. Indeed, since $\frac{\alpha'}{\beta'} = a^{T}x + \xi$ by assumption, we have that $\mathbb{E}[\frac{\alpha'^2}{4\beta'}] = \frac{a^T}{4} \mathbb{E}[\alpha'x].$

Therefore, by invoking Theorems 5.3 and 7.48  in \cite{shapiro2021lectures}, we have that the minimizer of the sample average approximation problem~\eqref{eq:prod_bilevel_gamma_proof} converges to $a$ almost surely as the sample size $N$ grows to infinity.


%
\end{proof}
}

\section*{CRediT authorship contribution statement}

\textbf{M. A. Muñoz}: Data curation, Software, Methodology, Investigation, Formal analysis, Validation, Writing - Original Draft. \textbf{S. Pineda}: Conceptualization, Methodology, Investigation, Formal analysis, Writing - original draft, Supervision. \textbf{J. M. Morales}: Conceptualization, Methodology, Investigation, Formal analysis, Writing - original draft, Supervision, Funding acquisition.

\bibliographystyle{elsarticle-num-names}
\bibliography{bibliography}

\begin{thebibliography}{40}
\expandafter\ifx\csname natexlab\endcsname\relax\def\natexlab#1{#1}\fi
\providecommand{\url}[1]{\texttt{#1}}
\providecommand{\href}[2]{#2}
\providecommand{\path}[1]{#1}
\providecommand{\DOIprefix}{doi:}
\providecommand{\ArXivprefix}{arXiv:}
\providecommand{\URLprefix}{URL: }
\providecommand{\Pubmedprefix}{pmid:}
\providecommand{\doi}[1]{\href{http://dx.doi.org/#1}{\path{#1}}}
\providecommand{\Pubmed}[1]{\href{pmid:#1}{\path{#1}}}
\providecommand{\bibinfo}[2]{#2}
\ifx\xfnm\relax \def\xfnm[#1]{\unskip,\space#1}\fi
\bibitem[{Bertsimas and Kallus(2020)}]{Bertsimas2020}
\bibinfo{author}{D.~Bertsimas}, \bibinfo{author}{N.~Kallus},
\newblock \bibinfo{title}{From predictive to prescriptive analytics},
\newblock \bibinfo{journal}{Management Science} \bibinfo{volume}{66}
  (\bibinfo{year}{2020}) \bibinfo{pages}{1025--1044}.
\bibitem[{Donti et~al.(2017)Donti, Amos, and Kolter}]{Donti2017}
\bibinfo{author}{P.~Donti}, \bibinfo{author}{B.~Amos}, \bibinfo{author}{J.~Z.
  Kolter},
\newblock \bibinfo{title}{Task-based end-to-end model learning in stochastic
  optimization},
\newblock \bibinfo{journal}{Advances in Neural Information Processing Systems}
  (\bibinfo{year}{2017}) \bibinfo{pages}{5484--5494}.
\bibitem[{Ban and Rudin(2019)}]{Ban2019}
\bibinfo{author}{G.-Y. Ban}, \bibinfo{author}{C.~Rudin},
\newblock \bibinfo{title}{The big data newsvendor: Practical insights from
  machine learning},
\newblock \bibinfo{journal}{Operations Research} \bibinfo{volume}{67}
  (\bibinfo{year}{2019}) \bibinfo{pages}{90--108}.
\bibitem[{Bertsimas et~al.(2019)Bertsimas, McCord, and
  Sturt}]{Bertsimas2019dynamic}
\bibinfo{author}{D.~Bertsimas}, \bibinfo{author}{C.~McCord},
  \bibinfo{author}{B.~Sturt},
\newblock \bibinfo{title}{Dynamic optimization with side information},
\newblock \bibinfo{journal}{arXiv preprint arXiv:1907.07307}
  (\bibinfo{year}{2019}).
\bibitem[{Elmachtoub and Grigas(2021)}]{Elmachtoub2017}
\bibinfo{author}{A.~N. Elmachtoub}, \bibinfo{author}{P.~Grigas},
\newblock \bibinfo{title}{Smart ``{P}redict, then {O}ptimize"},
\newblock \bibinfo{journal}{Management Science}  (\bibinfo{year}{2021}).
  \DOIprefix\doi{https://doi.org/10.1287/mnsc.2020.3922}.
\bibitem[{Esfahani and Kuhn(2018)}]{esfahani2018data}
\bibinfo{author}{P.~M. Esfahani}, \bibinfo{author}{D.~Kuhn},
\newblock \bibinfo{title}{Data-driven distributionally robust optimization
  using the wasserstein metric: Performance guarantees and tractable
  reformulations},
\newblock \bibinfo{journal}{Mathematical Programming} \bibinfo{volume}{171}
  (\bibinfo{year}{2018}) \bibinfo{pages}{115--166}.
\bibitem[{Li et~al.(2019)Li, Jiang, and Mathieu}]{li2019ambiguous}
\bibinfo{author}{B.~Li}, \bibinfo{author}{R.~Jiang}, \bibinfo{author}{J.~L.
  Mathieu},
\newblock \bibinfo{title}{Ambiguous risk constraints with moment and
  unimodality information},
\newblock \bibinfo{journal}{Mathematical Programming} \bibinfo{volume}{173}
  (\bibinfo{year}{2019}) \bibinfo{pages}{151--192}.
\bibitem[{Hao et~al.(2020)Hao, He, Hu, and Jiang}]{Hao2020}
\bibinfo{author}{Z.~Hao}, \bibinfo{author}{L.~He}, \bibinfo{author}{Z.~Hu},
  \bibinfo{author}{J.~Jiang},
\newblock \bibinfo{title}{Robust vehicle pre-allocation with uncertain
  covariates},
\newblock \bibinfo{journal}{Production and Operations Management}
  \bibinfo{volume}{29} (\bibinfo{year}{2020}) \bibinfo{pages}{955--972}.
\bibitem[{Keith and Ahner(2019)}]{keith2019survey}
\bibinfo{author}{A.~J. Keith}, \bibinfo{author}{D.~K. Ahner},
\newblock \bibinfo{title}{A survey of decision making and optimization under
  uncertainty},
\newblock \bibinfo{journal}{Annals of Operations Research}
  (\bibinfo{year}{2019}) \bibinfo{pages}{1--35}.
\bibitem[{Bakker et~al.(2020)Bakker, Dunke, and Nickel}]{Bakker2019structuring}
\bibinfo{author}{H.~Bakker}, \bibinfo{author}{F.~Dunke},
  \bibinfo{author}{S.~Nickel},
\newblock \bibinfo{title}{A structuring review on multi-stage optimization
  under uncertainty: Aligning concepts from theory and practice},
\newblock \bibinfo{journal}{Omega} \bibinfo{volume}{96} (\bibinfo{year}{2020})
  \bibinfo{pages}{102080}.
\bibitem[{Mundru(2019)}]{Mundru2019}
\bibinfo{author}{N.~Mundru}, \bibinfo{title}{Predictive and prescriptive
  methods in operations research and machine learning: an optimization
  approach}, Ph.D. thesis, Massachusetts Institute of Technology,
  \bibinfo{year}{2019}.
\bibitem[{Bertsimas et~al.(2018)Bertsimas, Gupta, and Kallus}]{bertsimas2018}
\bibinfo{author}{D.~Bertsimas}, \bibinfo{author}{V.~Gupta},
  \bibinfo{author}{N.~Kallus},
\newblock \bibinfo{title}{Robust sample average approximation},
\newblock \bibinfo{journal}{Mathematical Programming} \bibinfo{volume}{171}
  (\bibinfo{year}{2018}) \bibinfo{pages}{217--282}.
\bibitem[{Vives(1984)}]{Vives1984}
\bibinfo{author}{X.~Vives},
\newblock \bibinfo{title}{Duopoly information equilibrium: Cournot and
  {B}ertrand},
\newblock \bibinfo{journal}{Journal of Economic Theory} \bibinfo{volume}{34}
  (\bibinfo{year}{1984}) \bibinfo{pages}{71 -- 94}.
\bibitem[{Wu et~al.(2008)Wu, Zhai, and Huang}]{wu2008incentives}
\bibinfo{author}{J.~Wu}, \bibinfo{author}{X.~Zhai}, \bibinfo{author}{Z.~Huang},
\newblock \bibinfo{title}{Incentives for information sharing in duopoly with
  capacity constraints},
\newblock \bibinfo{journal}{Omega} \bibinfo{volume}{36} (\bibinfo{year}{2008})
  \bibinfo{pages}{963--975}.
\bibitem[{Bimpikis et~al.(2019)Bimpikis, Ehsani, and
  Ilk{\i}l{\i}{\c{c}}}]{Bimpikis2019}
\bibinfo{author}{K.~Bimpikis}, \bibinfo{author}{S.~Ehsani},
  \bibinfo{author}{R.~Ilk{\i}l{\i}{\c{c}}},
\newblock \bibinfo{title}{Cournot competition in networked markets},
\newblock \bibinfo{journal}{Management Science} \bibinfo{volume}{65}
  (\bibinfo{year}{2019}) \bibinfo{pages}{2467--2481}.
\bibitem[{Allaz and Vila(1993)}]{Allaz1993}
\bibinfo{author}{B.~Allaz}, \bibinfo{author}{J.-L. Vila},
\newblock \bibinfo{title}{Cournot competition, forward markets and efficiency},
\newblock \bibinfo{journal}{Journal of Economic Theory} \bibinfo{volume}{59}
  (\bibinfo{year}{1993}) \bibinfo{pages}{1 -- 16}.
\bibitem[{Ruiz et~al.(2008)Ruiz, Conejo, and García-Bertrand}]{Ruiz2008}
\bibinfo{author}{C.~Ruiz}, \bibinfo{author}{A.~Conejo},
  \bibinfo{author}{R.~García-Bertrand},
\newblock \bibinfo{title}{Some analytical results pertaining to {C}ournot
  models for short-term electricity markets},
\newblock \bibinfo{journal}{Electric Power Systems Research}
  \bibinfo{volume}{78} (\bibinfo{year}{2008}) \bibinfo{pages}{1672 -- 1678}.
\bibitem[{Mandi et~al.(2019)Mandi, Demirovi{\'c}, Stuckey, Guns
  et~al.}]{Mandi2019}
\bibinfo{author}{J.~Mandi}, \bibinfo{author}{E.~Demirovi{\'c}},
  \bibinfo{author}{P.~Stuckey}, \bibinfo{author}{T.~Guns}, et~al.,
\newblock \bibinfo{title}{Smart predict-and-optimize for hard combinatorial
  optimization problems},
\newblock \bibinfo{journal}{arXiv preprint arXiv:1911.10092}
  (\bibinfo{year}{2019}).
\bibitem[{hao Kao et~al.(2009)hao Kao, Roy, and Yan}]{Kao2009}
\bibinfo{author}{Y.~hao Kao}, \bibinfo{author}{B.~V. Roy},
  \bibinfo{author}{X.~Yan},
\newblock \bibinfo{title}{Directed regression},
\newblock \bibinfo{journal}{Advances in Neural Information Processing Systems
  22}  (\bibinfo{year}{2009}) \bibinfo{pages}{889--897}.
\bibitem[{Wilder et~al.(2019)Wilder, Dilkina, and Tambe}]{Wilder2019}
\bibinfo{author}{B.~Wilder}, \bibinfo{author}{B.~Dilkina},
  \bibinfo{author}{M.~Tambe},
\newblock \bibinfo{title}{Melding the data-decisions pipeline: Decision-focused
  learning for combinatorial optimization},
\newblock \bibinfo{journal}{Proceedings of the AAAI Conference on Artificial
  Intelligence} \bibinfo{volume}{33} (\bibinfo{year}{2019})
  \bibinfo{pages}{1658--1665}.
\bibitem[{Dempe(2017)}]{Dempe2017}
\bibinfo{author}{S.~Dempe}, \bibinfo{title}{Bilevel Optimization: Reformulation
  and First Optimality Conditions}, \bibinfo{publisher}{Springer Singapore},
  \bibinfo{address}{Singapore}, \bibinfo{year}{2017}, pp.
  \bibinfo{pages}{1--20}.
\bibitem[{Casorr{\'a}n et~al.(2019)Casorr{\'a}n, Fortz, Labb{\'e}, and
  Ord{\'o}{\~n}ez}]{Labbe2019}
\bibinfo{author}{C.~Casorr{\'a}n}, \bibinfo{author}{B.~Fortz},
  \bibinfo{author}{M.~Labb{\'e}}, \bibinfo{author}{F.~Ord{\'o}{\~n}ez},
\newblock \bibinfo{title}{A study of general and security {S}tackelberg game
  formulations},
\newblock \bibinfo{journal}{European Journal of Operational Research}
  \bibinfo{volume}{278} (\bibinfo{year}{2019}) \bibinfo{pages}{855--868}.
\bibitem[{Kleywegt et~al.(2002)Kleywegt, Shapiro, and Homem-de
  Mello}]{kleywegt2002sample}
\bibinfo{author}{A.~J. Kleywegt}, \bibinfo{author}{A.~Shapiro},
  \bibinfo{author}{T.~Homem-de Mello},
\newblock \bibinfo{title}{The sample average approximation method for
  stochastic discrete optimization},
\newblock \bibinfo{journal}{SIAM Journal on Optimization} \bibinfo{volume}{12}
  (\bibinfo{year}{2002}) \bibinfo{pages}{479--502}.
\bibitem[{Ruszczynski and Shapiro(2003)}]{ruszczynski2003stochastic}
\bibinfo{author}{A.~Ruszczynski}, \bibinfo{author}{A.~Shapiro},
  \bibinfo{title}{Stochastic programming (handbooks in operations research and
  management science)}, \bibinfo{year}{2003}.
\bibitem[{Ho and Hanasusanto(2019)}]{Ho2019}
\bibinfo{author}{C.~P. Ho}, \bibinfo{author}{G.~A. Hanasusanto},
\newblock \bibinfo{title}{On data-driven prescriptive analytics with side
  information: A regularized {N}adaraya-{W}atson approach},
\newblock \bibinfo{journal}{URL:
  http://www.optimization-online.org/DB\_FILE/2019/01/7043.pdf}
  (\bibinfo{year}{2019}).
\bibitem[{Bertsimas and Van~Parys(2017)}]{Bertsimas2017}
\bibinfo{author}{D.~Bertsimas}, \bibinfo{author}{B.~Van~Parys},
\newblock \bibinfo{title}{Bootstrap robust prescriptive analytics},
\newblock \bibinfo{journal}{arXiv preprint arXiv:1711.09974}
  (\bibinfo{year}{2017}).
\bibitem[{Diao and Sen(2020)}]{Diao2020}
\bibinfo{author}{S.~Diao}, \bibinfo{author}{S.~Sen},
\newblock \bibinfo{title}{Distribution-free algorithms for learning enabled
  optimization with non-parametric estimation},
\newblock \bibinfo{journal}{Management Science} \bibinfo{volume}{66}
  (\bibinfo{year}{2020}) \bibinfo{pages}{1025--1044}.
\bibitem[{Boyd et~al.(2004)Boyd, Boyd, and Vandenberghe}]{Boyd2004}
\bibinfo{author}{S.~Boyd}, \bibinfo{author}{S.~P. Boyd},
  \bibinfo{author}{L.~Vandenberghe}, \bibinfo{title}{Convex optimization},
  \bibinfo{publisher}{Cambridge university press}, \bibinfo{year}{2004}.
\bibitem[{Scheel and Scholtes(2000)}]{Scheel2000}
\bibinfo{author}{H.~Scheel}, \bibinfo{author}{S.~Scholtes},
\newblock \bibinfo{title}{Mathematical programs with complementarity
  constraints: Stationarity, optimality, and sensitivity},
\newblock \bibinfo{journal}{Mathematics of Operations Research}
  \bibinfo{volume}{25} (\bibinfo{year}{2000}) \bibinfo{pages}{1--22}.
\bibitem[{Scholtes(2001)}]{Scholtes2001}
\bibinfo{author}{S.~Scholtes},
\newblock \bibinfo{title}{Convergence properties of a regularization scheme for
  mathematical programs with complementarity constraints},
\newblock \bibinfo{journal}{SIAM Journal on Optimization} \bibinfo{volume}{11}
  (\bibinfo{year}{2001}) \bibinfo{pages}{918--936}.
\bibitem[{Ralph and Wright(2004)}]{Ralph2004}
\bibinfo{author}{D.~Ralph}, \bibinfo{author}{S.~J. Wright},
\newblock \bibinfo{title}{Some properties of regularization and penalization
  schemes for mpecs},
\newblock \bibinfo{journal}{Optimization Methods and Software}
  \bibinfo{volume}{19} (\bibinfo{year}{2004}) \bibinfo{pages}{527--556}.
\bibitem[{Fortuny-Amat and McCarl(1981)}]{Fortuny1981}
\bibinfo{author}{J.~Fortuny-Amat}, \bibinfo{author}{B.~McCarl},
\newblock \bibinfo{title}{A representation and economic interpretation of a
  two-level programming problem},
\newblock \bibinfo{journal}{Journal of the operational Research Society}
  \bibinfo{volume}{32} (\bibinfo{year}{1981}) \bibinfo{pages}{783--792}.
\bibitem[{Pineda et~al.(2018)Pineda, Bylling, and Morales}]{Pineda2018}
\bibinfo{author}{S.~Pineda}, \bibinfo{author}{H.~Bylling},
  \bibinfo{author}{J.~Morales},
\newblock \bibinfo{title}{Efficiently solving linear bilevel programming
  problems using off-the-shelf optimization software},
\newblock \bibinfo{journal}{Optimization and Engineering} \bibinfo{volume}{19}
  (\bibinfo{year}{2018}) \bibinfo{pages}{187--211}.
\bibitem[{Garcia et~al.(2021)Garcia, Street, de~Mello, and Muñoz}]{Garcia2021}
\bibinfo{author}{J.~D. Garcia}, \bibinfo{author}{A.~Street},
  \bibinfo{author}{T.~H. de~Mello}, \bibinfo{author}{F.~D. Muñoz},
  \bibinfo{title}{Application-driven learning via joint prediction and
  optimization of demand and reserves requirement}, \bibinfo{year}{2021}.
  \href{http://arxiv.org/abs/2102.13273}{{\tt arXiv:2102.13273}}.
\bibitem[{{D}jurovi{\'c} et~al.(2012){D}jurovi{\'c}, Mila{\v{c}}i{\'c}, and
  Kr{\v{s}}ulja}]{Djurovic2012}
\bibinfo{author}{M.~{\v{Z}}. {D}jurovi{\'c}},
  \bibinfo{author}{A.~Mila{\v{c}}i{\'c}}, \bibinfo{author}{M.~Kr{\v{s}}ulja},
\newblock \bibinfo{title}{A simplified model of quadratic cost function for
  thermal generators},
\newblock \bibinfo{journal}{Proceedings of the 23rd International DAAAM
  Symposium} \bibinfo{volume}{23} (\bibinfo{year}{2012})
  \bibinfo{pages}{25--28}.
\bibitem[{Omi(2020)}]{Omie}
\bibinfo{title}{{OMIE offer and demand aggregated curves}},
  \bibinfo{year}{2020}. \bibinfo{note}{Available (online):
  \url{https://www.omie.es/}. Last accessed on February 6, 2020}.
\bibitem[{Ent(2020)}]{Entsoe}
\bibinfo{title}{{ENTSO-E. Transparency Platform}}, \bibinfo{year}{2020}.
  \bibinfo{note}{Available (online): \url{https://transparency.entsoe.eu/}.
  Last accessed on February 6, 2020}.
\bibitem[{CPL(2020)}]{CPLEX}
\bibinfo{title}{{CPLEX Optimizer}}, \bibinfo{howpublished}{IBM},
  \bibinfo{year}{2020}. \bibinfo{note}{Available (online):
  \url{https://www.ibm.com/analytics/cplex-optimizer}}.
\bibitem[{CON(2020)}]{CONOPT}
\bibinfo{title}{{CONOPT Optimizer}}, \bibinfo{howpublished}{ARKI Consulting
  Development A/S}, \bibinfo{year}{2020}. \bibinfo{note}{Available (online):
  \url{http://www.conopt.com/}}.
\bibitem[{Shapiro et~al.(2021)Shapiro, Dentcheva, and
  Ruszczynski}]{shapiro2021lectures}
\bibinfo{author}{A.~Shapiro}, \bibinfo{author}{D.~Dentcheva},
  \bibinfo{author}{A.~Ruszczynski}, \bibinfo{title}{Lectures on stochastic
  programming: modeling and theory}, \bibinfo{publisher}{SIAM},
  \bibinfo{year}{2021}.

\end{thebibliography}

\end{document}